\newcommand{\linktojournal}[1]{\relax}
\newcommand{\R}{\mathbb{R}}
\newcommand{\Z}{\mathbb{Z}}
\newcommand{\N}{\mathbb{N}}
\newcommand{\ep}{\varepsilon}
\newcommand{\pa}{\partial}
\newcommand{\lr}[1]{{}\langle{}#1{}\rangle{}}
\newtheorem{theorem}{Theorem}[section]
\newtheorem{lemma}[theorem]{Lemma}
\newtheorem{proposition}[theorem]{Proposition}
\theoremstyle{remark}
\newtheorem{remark}{Remark}[section]
\theoremstyle{definition}
\newtheorem{definition}{Definition}[section]
\numberwithin{equation}{section}
\def\@cite#1#2{[{{\bfseries #1}\if@tempswa , #2\fi}]}
\newcounter{hours}\newcounter{minutes}
\renewcommand*{\thehours}{\two@digits\c@hours}
\renewcommand*{\theminutes}{\two@digits\c@minutes}
\begin{document}
\begin{center}
\Large{{\bf
Higher order asymptotic expansion of solutions 
to \\abstract linear hyperbolic equations
}}
\end{center}

\vspace{5pt}

\begin{center}
Motohiro Sobajima%
\footnote{
Department of Mathematics, 
Faculty of Science and Technology, Tokyo University of Science,  
2641 Yamazaki, Noda-shi, Chiba, 278-8510, Japan,  
E-mail:\ {\tt msobajima1984@gmail.com}}
\end{center}

\newenvironment{summary}{\vspace{.5\baselineskip}\begin{list}{}{%
     \setlength{\baselineskip}{0.85\baselineskip}
     \setlength{\topsep}{0pt}
     \setlength{\leftmargin}{12mm}
     \setlength{\rightmargin}{12mm}
     \setlength{\listparindent}{0mm}
     \setlength{\itemindent}{\listparindent}
     \setlength{\parsep}{0pt}
     \item\relax}}{\end{list}\vspace{.5\baselineskip}}
\begin{summary}
{\footnotesize {\bf Abstract.}
The paper concerned with higher order asymptotic expansion 
of solutions to the Cauchy problem of 
abstract hyperbolic equations of the form 
$u''+Au+u'=0$ in a Hilbert space, where $A$ is a nonnegative selfadjoint operator. 
The result says that 
by assuming the regularity of initial data, 
asymptotic profiles (of arbitrary order) 
are explicitly written by using the semigroup $e^{-tA}$ generated by 
$-A$. To prove this, a kind of maximal regularity for $e^{-tA}$ 
is used. 
}
\end{summary}

{\footnotesize{\it Mathematics Subject Classification}\/ (2010): %
Primary:%
	35L90, 
	35B40, 
Secondary:%
	34G10, 
	35E15. 
}

{\footnotesize{\it Key words and phrases}\/: 
Hyperbolic equations in Hilbert spaces, higher order expansion of solution.
}

\section{Introduction}
Let $H$ be a Hilbert space over $\R$ with the inner product $(\cdot,\cdot)$ and 
the norm $\|\cdot\|$. 
In this paper we consider higher order asymptotic expansion 
of solutions to abstract hyperbolic equations 
of the form 
\begin{align}\label{ADW}
\begin{cases}u''(t)+Au(t)+u'(t)=0, 
& t>0, 
\\
(u,u')(0)=(u_0,u_1),
\end{cases}
\end{align}
where $u:[0,t)\to H$ is an unknown function and 
$A$ is a nonnegative selfadjoint operator in $H$. 
We denote $D(A)$ as a domain of $A$.  
The pair $(u_0,u_1)\in D(A^{1/2})\times H$ is given. 

The problem \eqref{ADW} is motivated as the generalization of 
the damped wave equation 
\begin{equation}\label{dw}
\begin{cases}
\pa_t^2u(x,t)-\Delta u(x,t)+\pa_tu(x,t)=0, 
& (x,t)\in \R^N\times (0,\infty), 
\\
(u,u')(x,0)=(u_0(x),u_1(x)).
\end{cases}
\end{equation}
with $(u_0,u_1)\in H^1(\R^N)\times L^2(\R^N)$. 
The abstract framework as \eqref{ADW} 
is firstly introduced by Ikehata--Nishihara \cite{IkNi2003}.

The equation \eqref{dw} has been considered as a 
model of the phenomenon of heat conduction with 
finite propagation property (see Cattaneo \cite{Cattaneo} and Vernotte \cite{Vernotte}). 
Therefore the solution of \eqref{dw} 
is expected to have a similar profile of solutions 
to the heat equation
\begin{equation}\label{h}
\begin{cases}
\pa_tv(x,t)-\Delta u(x,t)=0, 
& (x,t)\in \R^N\times (0,\infty), 
\\
v(x,0)=v_0(x). 
\end{cases}
\end{equation}
Actually it is known that 
under the suitable assumption, 
the solution $u$ of \eqref{dw} behaves like the solution $v$ 
of \eqref{h} with $v_0=u_0+u_1$. 
This phenomenon is now so-called diffusion phenomenon 
for the damped wave equations. 
There are many investigation dealing with 
diffusion phenomena for \eqref{dw} 
(see e.g., Hsiao--Liu \cite{HsLi1992}, 
Nishihara \cite{Nishihara1997,Ni2003MathZ}, 
Karch \cite{Kar2000}, 
Yang--Milani \cite{YaMi2000} including 
some generalized problems like quasilinear systems). 
In the case of the damped wave equation in the exterior domain, 
Fourier analysis does not work well, and therefore, 
energy methods via integration by parts are often used 
(see e.g., Ikehata--Matsuyama \cite{IkMa2002}, 
Ikehata \cite{Ik2002} and Ikehata--Saeki \cite{IkSa2000}). 

Another example of the \eqref{ADW} is the damped beam equation 
\begin{equation}\label{db}
\begin{cases}
\pa_t^2u(x,t)+\pa_x^4u(x,t)-\alpha\pa_x^2 u(x,t)+\pa_tu(x,t)=\pa_x f\big(\pa_xu(x,t)\big), 
& (x,t)\in \R\times (0,\infty), 
\\
(u,u')(x,0)=(u_0(x),u_1(x)).
\end{cases}
\end{equation}
The global existence and asymptotic behavior of solutions to \eqref{db} 
are studied in Takeda--Yoshikawa \cite{TaYo2013, TaYo2012, TaYo2015} and
and Takeda \cite{Takeda2015}. 
In these papers, the asymptotic behavior of solutions of \eqref{db}
with $f\equiv 0$ 
plays a crucial role to consider global existence of nonlinear problem. 
Actually, in \cite{TaYo2012} the asymptotic behavior of solutions of nonlinear 
problem can be found as solutions of a linear problem.

For the further analysis, as an improvement of diffusion phenomena, 
the problem of higher order asymptotic expansion 
should be naturally considered. 
In 2001, Orive--Zuazua--Pazoto \cite{OrZuPa2001}, 
they considered the following general problem of \eqref{dw}:
\begin{equation}\label{dw-2}
\begin{cases}
\rho(x)\pa_t^2u(x,t)-{\rm div}\Big(a(x)\nabla u(x,t)\Big)+a_0\rho(x)\pa_tu(x,t)=0, 
& (x,t)\in \R^N\times (0,\infty), 
\\
(u,u')(x,0)=(u_0(x),u_1(x)),
\end{cases}
\end{equation}
where $\rho(x)$, $\rho(x)^{-1}$ and $a(x)=(a_{jk}(x))_{jk}$ are bounded 
and spatially periodic  
and the matrix $(a_{jk})_{jk}$ is uniformly and positively determined.  
In \cite{OrZuPa2001}, higher order asymptotic expansion 
of solutions to \eqref{dw-2} 
via the use of Bloch wave decomposition which is valid 
for spatial periodic coefficients. 
Later, Takeda \cite{Takeda2015} gave higher order asymptotic expansion 
by using the usual heat semigroup $e^{t\Delta}$ for \eqref{dw} 
via Fourier multiplier theory 
with asymptotic expansion of symbol of evolution operator with respect to 
the variable of Fourier spaces $\xi$.  
An asymptotic expansion for wave part can be found in a recent paper 
Michihisa \cite{Michihisa-arxiv}.

Instead of the various results in the previous works, 
in this paper the interest is how to systematically 
determine the asymptotic profile of arbitrary order. 
To discuss this problem we then consider 
an abstract hyperbolic equation \eqref{ADW} 
which is 
included three situations 
\eqref{dw} ($A=-\Delta$),  
\eqref{db} ($A=\frac{d^4}{dx^4}-\alpha\frac{d^2}{dx^2}$) 
and
\eqref{dw-2} ($Au=\rho^{-1}{\rm div}(a\nabla u)$).

Concerning the previous works of the abstract setting, 
Ikehata--Nishihara \cite{IkNi2003} introduced the problem \eqref{ADW} and 
observed the asymptotic behavior of solutions in the following way:
\[
\|u(t)-e^{-tA}(u_0+u_1)\|\leq C(1+t)^{-1}\Big(\log(e+t)\Big)^{1/2+\ep}, \quad (u_0,u_1)\in D(A)\times D(A^{1/2}).
\]
After that Chill--Haraux \cite{ChHa2003} discussed the same problem and 
succeeded in removing the logarithmic correction of the above inequality, 
which is conjectured in Ikehata--Nishihara \cite{IkNi2003}. 
Radu--Todorova--Yordanov \cite{RaToYo2011} 
studied also diffusion phenomena with respect to 
stronger norms $\|\cdot\|_{D(A^k)}$ (a similar analysis for a linear hyperbolic equation in Hilbert spaces
with time-dependent damping term $b(t)u'$ can be found in Yamazaki \cite{Yamazaki2006}).  
Radu--Todorova--Yordanov \cite{RaToYo2016} discussed 
a higher order approximation of solutions to $Bu''+Au+u'=0$ 
($B$ is bounded, selfadjoint and positively definite); 
however, their framework is only valid 
for semigroup in metric measure spaces $L^2(\Omega,\mu)$ 
with an extra maximal $L^p$-$L^q$ regularity.

The purpose of the present paper is to discuss higher order asymptotic expansion 
of solutions \eqref{ADW} by using the semigroup $e^{-tA}$ generated by $-A$. 
Moreover, the main topic is to give a way how to construct 
higher order asymptotic profiles of arbitrary order.

To begin with, we state existence of solutions to the problem \eqref{ADW}.
The following proposition follows from 
the standard theory for $C_0$-semigroup on product spaces 
(see e.g., Ikehata--Nishihara \cite{IkNi2003} and also Brezis \cite[Section X]{Brezis} when $A=-\Delta$). 
\begin{proposition}
Let $A$ be a nonnegative selfadjoint operator in $H$ with domain 
$D(A)$. Then the operator 
\[
\mathcal{A}
=\begin{pmatrix}
0 & -1\\
A &  1
\end{pmatrix}, 
\]
in $\mathcal{H}=D(A^{1/2})\times H$ with domain 
$D(\mathcal{A})=D(A)\times D(A^{1/2})$
is quasi-$m$-accretive in $\mathcal{H}$. 
Namely, $-\mathcal{A}$ generates a $C_0$-semigroup in $\mathcal{H}$. 

In particular, the solution $u$ of \eqref{ADW} uniquely exists in the following sense:
\begin{itemize}
\item[\bf (i)]
for $(u_0,u_1)\in D(A^{1/2})\times D(H)$, one has 
$(u(t),u'(t))=e^{-t\mathcal{A}}(u_0,u_1)$ which implies 
\[
u\in C^2([0,\infty);[D(A^{1/2})]^*)\cap C^1([0,\infty);H)\cap C([0,\infty);D(A^{1/2})).
\]
and the equation \eqref{ADW} is satisfied in $[D(A^{1/2})]^*$ (the dual space of $D(A^{1/2})$), that is, 
\[
\lr{u''(t),\varphi}_{[D(A^{1/2})]^*,D(A^{1/2})}
+
(A^{1/2}u(t),A^{1/2}\varphi)
+(u'(t),\varphi)=0, \quad \forall\varphi\in D(A^{1/2})
\] 
\item[\bf (ii)]
for $(u_0,u_1)\in D(A^{\frac{n+1}{2}})\times D(A^{\frac{n}{2}})\ (n\in\N)$, one has 
$(u(t),u'(t))=e^{-t\mathcal{A}}(u_0,u_1)$ which implies 
\[
u\in \bigcap_{k=0}^{n+1}C^{k}([0,\infty);D(A^{\frac{n-k}{2}}))
\]
\end{itemize}
\end{proposition}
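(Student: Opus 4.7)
The plan is to apply the Lumer--Phillips theorem to $\mathcal{A}$ on the product Hilbert space $\mathcal{H}=D(A^{1/2})\times H$, where $D(A^{1/2})$ carries its graph norm and $\mathcal{H}$ is given the natural inner product
\[
((u,v),(w,z))_{\mathcal{H}}=(u,w)+(A^{1/2}u,A^{1/2}w)+(v,z);
\]
this makes $\mathcal{H}$ a Hilbert space since $A^{1/2}$ is closed.

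First I would verify quasi-accretivity. For $(u,v)\in D(\mathcal{A})=D(A)\times D(A^{1/2})$, the fact that $u\in D(A)$ yields $(A^{1/2}v,A^{1/2}u)=(v,Au)=(Au,v)$ by selfadjointness of $A^{1/2}$, so the two mixed terms cancel and a short computation leaves
\[
(\mathcal{A}(u,v),(u,v))_{\mathcal{H}}=\|v\|^2-(u,v)\geq -\tfrac12\|(u,v)\|_{\mathcal{H}}^2
\]
by Cauchy--Schwarz, which proves that $\mathcal{A}+\tfrac12 I$ is accretive on $\mathcal{H}$.

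Next I would verify the range condition: for any $\mu>0$ and $(f,g)\in\mathcal{H}$, the resolvent equation $(\mu I+\mathcal{A})(u,v)=(f,g)$ reduces, after eliminating $v=\mu u-f$, to
\[
(A+\mu(\mu+1)I)u=g+(\mu+1)f \quad \text{in } H.
\]
Since $A$ is nonnegative selfadjoint, the spectral theorem (or equivalently Lax--Milgram on the bilinear form $(A^{1/2}\cdot,A^{1/2}\cdot)+\mu(\mu+1)(\cdot,\cdot)$ on $D(A^{1/2})$) gives a unique solution $u\in D(A)$, and then $v=\mu u-f\in D(A^{1/2})$, so $(u,v)\in D(\mathcal{A})$. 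The Lumer--Phillips theorem then yields that $-\mathcal{A}$ generates a quasi-contractive $C_0$-semigroup on $\mathcal{H}$.

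Finally, setting $(u(t),u'(t))=e^{-t\mathcal{A}}(u_0,u_1)$, part (i) follows from continuity of the semigroup together with an approximation argument from $D(\mathcal{A})$ that identifies the $H$-derivative of the first component with the second component, while the weak form of the equation in $[D(A^{1/2})]^*$ is obtained by pairing the second-component identity with a test element $\varphi\in D(A^{1/2})$. For part (ii) the key step is to prove by induction that
\[
D(\mathcal{A}^n)=D(A^{(n+1)/2})\times D(A^{n/2}),
\]
noting that $\mathcal{A}(u,v)=(-v,Au+v)$ lies in $D(\mathcal{A}^{n-1})=D(A^{n/2})\times D(A^{(n-1)/2})$ precisely when $v\in D(A^{n/2})$ and $u\in D(A^{(n+1)/2})$. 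Standard $C_0$-semigroup theory then gives $e^{-t\mathcal{A}}(u_0,u_1)\in\bigcap_{k=0}^{n}C^{k}([0,\infty);D(\mathcal{A}^{n-k}))$, and reading off the first component together with the equation $u''=-Au-u'$ delivers the regularity asserted in (ii). The only nontrivial bookkeeping is the identification of $D(\mathcal{A}^n)$; the remaining steps are routine.
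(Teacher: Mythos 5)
Your proposal is correct and is exactly the ``standard theory for $C_0$-semigroups on product spaces'' that the paper invokes without proof (citing Ikehata--Nishihara and Brezis): the quasi-accretivity identity $(\mathcal{A}(u,v),(u,v))_{\mathcal{H}}=\|v\|^2-(u,v)$, the range condition via $(A+\mu(\mu+1)I)u=g+(\mu+1)f$, and the identification $D(\mathcal{A}^n)=D(A^{(n+1)/2})\times D(A^{n/2})$ are all sound, and Lumer--Phillips then gives the generation result and the stated regularity. So you have simply filled in the argument the paper delegates to the references, in essentially the intended way.
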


Now we are in a position to give the result for 
higher order asymptotic expansion for 
abstract hyperbolic equation \eqref{ADW}, 
which is the main result of the present paper. 
\begin{theorem}\label{thm:main}
Assume that $(u_0,u_1)\in D(A^{m+1/2})\times D(A^{m+1/2})$ 
for some $m\in \N\cup \{0\}$. 
Let $u$ be a unique solution of \eqref{ADW}. 
Define $v_0=u_0+u_1$ and for $\ell\in \N\cup\{0\}$, 
\begin{align*}
\overline{u}_0(t)
&=e^{-tA}v_0, 
\\
\overline{u}_\ell(t)
&=
A^\ell\left(
\sum_{j=0}^\ell
\begin{pmatrix}
2\ell-1\\\ell+j-1
\end{pmatrix}\frac{(-tA)^j}{j!}e^{-tA}v_0
+
\sum_{k=0}^{\ell-1}
\begin{pmatrix}
2\ell-1\\ \ell+k
\end{pmatrix}\frac{(-tA)^k}{k!}e^{-tA}u_1
\right).
\end{align*}
Then there exists a positive constant $C$ such that for every $t\geq 0$, 
\begin{align*}
\left\|u(t)-\sum_{\ell=0}^m\overline{u}_\ell(t)\right\|
&\leq C(1+t)^{-m-1/2}.
\end{align*}
\end{theorem}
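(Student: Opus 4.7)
The plan is to define the error $w_m(t) := u(t) - \sum_{\ell=0}^{m}\overline{u}_\ell(t)$, use the combinatorial structure of the $\overline{u}_\ell$'s to express $w_m$ as the solution of the damped hyperbolic equation with zero Cauchy data and an explicit source term, and then control $\|w_m(t)\|$ by combining Duhamel's formula with the maximal regularity for $e^{-tA}$ advertised in the abstract.

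The first, largely algebraic, step is to compute $(\partial_t^2 + \partial_t + A)\overline{u}_\ell(t)$. Since $\overline{u}_\ell(t)$ is $A^\ell$ times a polynomial in $tA$ applied to $e^{-tA}v_0$ or $e^{-tA}u_1$, and since $\partial_t e^{-tA} = -Ae^{-tA}$, the computation is pure bookkeeping with the binomial coefficients $\binom{2\ell-1}{\ell+j-1}$ and $\binom{2\ell-1}{\ell+k}$. These coefficients should be exactly those that make the sum telescope, so that
$r_m(t) := (\partial_t^2 + \partial_t + A)\sum_{\ell=0}^{m}\overline{u}_\ell(t)$
involves only the highest-order tail, namely $A^{m+1}$ multiplied by a degree-$m$ polynomial in $tA$ acting on $e^{-tA}v_0$ and $e^{-tA}u_1$. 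A parallel computation at $t=0$ should give $\sum_\ell \overline{u}_\ell(0) = u_0$ and $\sum_\ell \overline{u}_\ell'(0) = u_1$, so that $w_m$ has vanishing Cauchy data.

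With this structure established, the error satisfies $w_m'' + w_m' + Aw_m = -r_m$ with $w_m(0)=w_m'(0)=0$. Since $r_m(s)$ carries the operator $A^{m+1}$ but the data live only in $D(A^{m+1/2})$, the residual is \emph{not} uniformly bounded as $s\to 0^+$; however, it should factor through $A^{1/2}e^{-sA/2}$ with the remaining vector controlled by $\|A^{m+1/2}v_0\|$ and $\|A^{m+1/2}u_1\|$ with polynomial $s$-dependence. Inserting this factorization into the Duhamel formula for the damped hyperbolic semigroup and applying a maximal-regularity identity of the form $\int_0^\infty \|A^{1/2}e^{-sA}\phi\|^2\,ds = \tfrac{1}{2}\|\phi\|^2$ should convert the source estimate into the desired bound $\|w_m(t)\| \le C(1+t)^{-m-1/2}$. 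The principal obstacle I anticipate is the combinatorial telescoping in the first step: verifying that the specific binomial coefficients produce the cancellation and leave a residual of this compact form. This should reduce to Pascal-type identities handled by induction on $\ell$, but it is where the real novelty of the construction lies; the analytic step is a reasonably standard spectral-calculus argument once the right factorization of $r_m$ is in hand.
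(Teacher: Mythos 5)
Your algebraic step is essentially sound and corresponds to what the paper does in disguise: the binomial coefficients do encode a telescoping structure (in the paper this is organized through the functions $V_m$ of Definition 4.1, the recursion $V_{m+1}'+AV_{m+1}=-V_m'$ of Lemma \ref{lem:asm:succ}, and the identity $\overline{u}_\ell=(-A)^\ell V_{\ell,\ell}$ of Lemma \ref{lem:V-derivative}). However, two of your claims fail. First, the error does \emph{not} have vanishing Cauchy data: already for $m=0$ one has $w_0(0)=u_0-(u_0+u_1)=-u_1$ and $w_0'(0)=u_1+A(u_0+u_1)$, and for general $m$ the data involve $u_1$ and powers of $A$ applied to the initial data. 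This is a fixable bookkeeping error, but it matters, because you then need decay of the homogeneous part as well, which brings us to the decisive gap.

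The analytic step as you describe it cannot work. In this abstract setting the damped-wave semigroup $e^{-t\mathcal A}$ is merely bounded on $\mathcal H$, with no decay whatsoever (Remark \ref{rem:0eigen}: $0$ may be an eigenvalue of $A$), and the residual, which carries $A^{m+1}$ against data only in $D(A^{m+1/2})$, satisfies only $\|r_m(s)\|\le Cs^{-1/2}$ pointwise in time. Hence the naive Duhamel bound gives $\|w_m(t)\|\lesssim\int_0^t s^{-1/2}\,ds\sim t^{1/2}$, and even Cauchy--Schwarz combined with the unweighted identity $\int_0^\infty\|A^{1/2}e^{-sA}\phi\|^2\,ds=\tfrac12\|\phi\|^2$ still produces a factor $t^{1/2}$ from the non-decaying propagator; no rate $(1+t)^{-m-1/2}$, indeed no decay at all, can come out this way. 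The paper's mechanism is genuinely different: it realizes the error as the $(m+1)$-st time derivative of an auxiliary solution $U_{m+1}$ of the damped equation with source $-V_m'$ (Lemma \ref{lem:orderm}), and extracts decay by an induction over time derivatives using \emph{weighted} energy functionals such as $(t_0+t)^{2\ell+1}E(w,t)+(\ell+2)(t_0+t)^{2\ell}E_*(w,t)$ (Lemma \ref{lem:en-induction}) in tandem with the \emph{weighted} maximal regularity $\int_0^t(1+s)^{n}\|A^{\frac{n+1}{2}}e^{-sA}f\|^2\,ds\le C(\|f\|^2+\|A^{n/2}f\|^2)$ of Lemma \ref{lem:max-reg} with $n=2\ell+1$, not just the $n=0$ identity you invoke. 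To repair your proposal you would have to import this weighted-in-time machinery (or an equivalent device converting weighted $L^2_t$ control of the source into pointwise decay of the solution); Duhamel plus the unweighted identity is structurally insufficient.
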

\begin{remark}\label{rem:0eigen}
Especially, if $0$ is an eigenvalue of $A$ and $\varphi_0$ is the corresponding eigenvector, then $e^{-tA}\varphi_0=\varphi_0$. Therefore there is no possibility 
to improve the upper bound \eqref{eq:contraction} 
in the general setting. 
Since the asymptotic profiles $\{\overline{u}_\ell\}$ satisfy
\begin{align}\label{eq:contraction}
\|\overline{u}_\ell(t)\|\leq C_\ell(1+t)^{-\ell},
\end{align}
the assertion in Theorem \ref{thm:main} can be understood as 
a higher order asymptotic expansion of solution $u$ of \eqref{ADW}.
Of course in the several situation such as $-\Delta$ in $\R^N$, 
we have $\|e^{-tA}(u_0+u_1)\|\leq C(1+t)^{-\alpha}$ 
for some $\alpha>0$. In this case we need some effort 
to determine especially the regularity of initial data. 
We will not touch the details of specialized cases. 
\end{remark}
\begin{remark}
If $m=0$, then Theorem \ref{thm:main} is weaker than 
those of Ikehata--Nishihara \cite{IkNi2003} and also 
Chill--Haraux \cite{ChHa2003}. 
Since the main topic of our result is 
to give a higher order asymptotic profiles, 
the optimality of decay rates is not precisely discussed.  

\end{remark}
\begin{remark}
In the case \eqref{dw}, 
we choose $H=L^2(\R^N)$ 
and $A=-\Delta$ endowed with domain $D(A)=H^2(\R^N)$.  
Takeda \cite{Takeda2015} obtained the same asymptotic expansion 
with a different expression
\begin{align*}
\overline{u}_\ell(t)&=\frac{1}{2}
\sum_{j=0}^\ell\alpha_{j,k}(-t)^j(-\Delta)^{j+\ell}e^{t\Delta}u_0
\\
&\quad+
\sum_{0\leq k_1+k_2\leq \ell}\alpha_{\ell-k_1-k_2,k_1}\beta_{k_2}(-t)^{\ell-k_1-k_2}(-\Delta)^{2\ell-k_1-k_2}e^{t\Delta}\left(\frac{1}{2}u_0+u_1\right),
\end{align*}
where 
\[
\alpha_{j,k}
=\frac{1}{j!k!}\left.\frac{d^k}{dr^k}\phi_j\right|_{r=0},
\quad
\phi_j(r)=\left(\frac{1}{2}+\sqrt{\frac{1}{4}-r}\right)^{-2j},\]
\[
\beta_{k}
=\frac{1}{k!}\left.\frac{d^k}{dr^k}\psi\right|_{r=0}, 
\quad 
\psi(r)=
\frac{1}{2}\left(\frac{1}{4}-r\right)^{-1/2}.
\]
This result is valid for the whole space case $\R^N$ 
and the effect of high-frequency part is clearly written. 
On the other hand, Theorem \ref{thm:main} asserts 
that the same asymptotic expansion is valid 
for every nonnegative selfadjoint operator in a Hilbert space. 
\end{remark}
\begin{remark}
In the case \eqref{db}, we choose 
$H=L^2(\R)$ and $A=\frac{d^4}{dx^4}-\alpha \frac{d^2}{dx^2}$ 
endowed with domain $D(A)=H^4(\R)$. 
According to Theorem \ref{thm:main} with $m=0$, 
we can see that 
\[
\|u(t)-e^{-tA}(u_0+u_1)\|\leq C(1+t)^{-\frac{1}{2}}. 
\]
On the other hand, in Takeda--Yoshikawa \cite{TaYo2012}, 
it is shown that the asymptotic behavior of the solution $u$ 
is given by $e^{-\alpha tA_0}(u_0+u_1)$, where $A_0=-\frac{d^2}{dx^2}$.
The observation in Theorem \ref{thm:main} does not have contradiction 
because of the following 
estimate
\[
\Big\|e^{-tA}(u_0+u_1)-e^{-\alpha tA_0}(u_0+u_1)\Big\|
\leq \frac{C}{t}\|tA_0e^{-\alpha tA_0}A_0(u_0+u_1)\|, 
\quad u_0+u_1\in H^2(\R). 
\]
However, if we choose the latter profile, 
then 
since the asymptotic expansion in Theorem \ref{thm:main} 
is written by the operator $A$ ($\neq A_0$), 
the their difference should be carefully analysed. 
\end{remark}
\begin{remark}
In the case \eqref{dw-2} (with bounded $\rho,\rho^{-1}$), we choose 
$H=L^2(\R)$ but an inner product different from the usual one: 
\[
(f,g)_{H}=\int_{\R^N}fg\rho\,dx.
\]
Then we use  $Au=\rho(x)^{-1}{\rm div}(a(x)\nabla u)$ endowed with domain $D(A)=H^2(\R^N)$. 
Of course we can consider the periodic setting $\mathbb{T}^N=(\R/\Z)^N$ by choosing Sobolev spaces of periodic functions. 
Combining the strategy of Bloch wave decomposition, 
we can also deduce a similar result in \cite{OrZuPa2001}. 
\end{remark}
Let us describe the strategy for 
the construction of asymptotic profiles.
It is well known that the solution $u$ of \eqref{ADW} has the energy decreasing property
\[
E(u,t)
+
2\int_0^t\|u'(s)\|^2\,ds=E(u,0)=\|u_1\|^2+\|A^{1/2}u_0\|^2,
\]
where the energy functional of the solution $u$ of \eqref{ADW} is defined as 
\[
E(u;t)=\|u'(t)\|^2+\|A^{1/2}u(t)\|^2.
\]
According to the experiences, we expect that 
the first order asymptotic profile of $u$ is $e^{-tA}(u_0+u_1)$.
On the other hand, 
if $w$ is the solution of \eqref{ADW}, then by a direct computation we have
\[
(1+t)\|w'(t)\|^2\leq (1+t)E(w,t)+\|w(t)\|^2\leq C.
\]
Combining this concept, we consider the following auxiliary problem 
\begin{equation}\label{intro.aux}
\begin{cases}
U_1''+AU_1+U_1'=Ae^{tA}(u_0+u_1), \quad t>0,
\\
(U_1,U_1')(0)=(0,-u_1).
\end{cases}
\end{equation}
After some computation, we have $u(t)=e^{-tA}(u_0+u_1)+U_1'(t)$, 
which is the first decomposition. 
It should be noticed that the function $U_1$ is completely the same as 
the function $Z$ in Ikehata--Nishihara \cite{IkNi2003}, 
however, the treatment of $U_1$ is different from \cite{IkNi2003} 
in which $Z$ is understood as the solution of $Z'+AZ=u'$, $Z(0)=0$. 
The idea of \eqref{intro.aux} can be understood as another use of 
the modified Morawetz method which is written in Ikehata--Matsuyama \cite{IkMa2002}. 

Then analysing the asymptotic profile of the solution $U_1$ 
and 
proceeding the similar argument as before, 
we successively obtain the sequence ${U_\ell}$ and their corresponding hyperbolic problems 
similar to  \eqref{intro.aux}. 
Finally, to obtain the desired decay property of error terms (such as $U_1'$), 
we use a kind of maximal regularity for 
the Cauchy problem of parabolic equation 
$V'+AV'=0$. This consideration suggests that the solution $u$ of \eqref{ADW} 
can be decomposed by 
\[
u(t)=\sum_{\ell=0}^m \frac{d^\ell}{dt^\ell}V_{\ell}(t)+\frac{d^{m+1}}{dt^{m+1}}U_{m+1}(t).
\]

The present paper is organized as follows. 
In Section 2, as a preliminary, we state and prove 
a kind of maximal regularity result  
the Cauchy problem of parabolic equation 
$V'+AV'=0$, which we will use later. 
In Section 3, we provide a proof of the simplest case $m=0$ 
in Theorem \ref{thm:main} 
to clarify how to show the diffusion phenomena. 
The first part of Section 4 provides 
a construction of 
a family of function which will be understood as higher order asymptotic profiles.
Finally, at the rest of Section 4 we show higher order asymptotic expansion 
of solutions to \eqref{ADW}
(Theorem \ref{thm:main} for $m\in \N$). 
\section{Preliminary result for the property of the semigroup $e^{-tA}$}

In this section, we give recall a kind of 
maximal regularity result for the abstract (selfadjoint) semigroup $e^{-tA}$. 
For the reader's convenience, we provide a short proof.   
\begin{lemma}\label{lem:max-reg}
If $f\in H$, then for every $n\in \N\cup\{0\}$ and $t\geq 0$, 
\begin{equation}\label{eq:lem1-1}
\frac{\|e^{-tA}f\|^2}{2}
+
\frac{2^{n}}{n!}\int_0^t s^{n}\|A^{\frac{n+1}{2}}e^{-sA}f\|^2\,ds=\frac{\|f\|^2}{2}.
\end{equation}
Moreover, if $f\in D(A^{n/2})$, then 
there exists a positive constant $C$ such that 
\[
\int_0^t (1+s)^{n}\|A^{\frac{n+1}{2}}e^{-sA}f\|^2\,ds\leq C(\|f\|^2+\|A^{n/2}f\|^2).
\]
\end{lemma}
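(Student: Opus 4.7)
The plan is to reduce both statements to scalar computations via the spectral theorem for the nonnegative selfadjoint operator $A$. Writing $d\mu_f(\lambda) = d\|E_\lambda f\|^2$ for the spectral measure associated with $f$, one has $\|A^{(n+1)/2}e^{-sA}f\|^2 = \int_0^\infty \lambda^{n+1}e^{-2s\lambda}\,d\mu_f(\lambda)$ together with analogous formulas for the remaining norms in the statement. Both the identity \eqref{eq:lem1-1} and the subsequent bound will then follow from pointwise scalar facts in $\lambda$, integrated against $d\mu_f$ via Fubini.

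For the identity itself I would argue by induction on $n$. The base case $n=0$ is just the standard energy identity for the heat-like semigroup, obtained by integrating $\frac{d}{ds}\|e^{-sA}f\|^2 = -2\|A^{1/2}e^{-sA}f\|^2$ from $0$ to $t$. For the inductive step I would integrate by parts in $\int_0^t s^n \|A^{(n+1)/2}e^{-sA}f\|^2\,ds$, using $\|A^{(n+1)/2}e^{-sA}f\|^2 = -\frac{1}{2}\frac{d}{ds}\|A^{n/2}e^{-sA}f\|^2$ to push a derivative onto the weight $s^n$, and then chain with the level-$(n-1)$ identity. Equivalently, at the spectral level, one is iteratively computing $\int_0^t s^n e^{-2s\lambda}\,ds$ by repeated integration by parts and recognising the resulting normalising constant as $\frac{n!}{2^{n+1}\lambda^{n+1}}$; this scalar viewpoint is probably the cleaner way to present the proof, since all signs and boundary contributions can be tracked in one variable.

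For the maximal regularity bound on $\int_0^t (1+s)^n \|A^{(n+1)/2}e^{-sA}f\|^2\,ds$ I would expand $(1+s)^n = \sum_{k=0}^n \binom{n}{k}s^k$ and control each weighted integral by the first part of the lemma (or by the same integration-by-parts device applied termwise). In the spectral representation this reduces to estimating $\lambda^{n+1}\int_0^\infty s^k e^{-2s\lambda}\,ds = c_{n,k}\lambda^{n-k}$ for $0 \le k \le n$, whose finite sum is dominated by $C(1 + \lambda^n)$. Integrating this scalar bound against $d\mu_f$ immediately yields $C\int_0^\infty (1+\lambda^n)\,d\mu_f(\lambda) = C(\|f\|^2 + \|A^{n/2}f\|^2)$, which is finite precisely under the hypothesis $f \in D(A^{n/2})$.

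The main bookkeeping obstacle lies in pinning down the precise coefficient $\frac{2^n}{n!}$ (and the signs of any boundary terms generated along the way) in the first identity: the iterated integration by parts is elementary but easy to get wrong in the combinatorics. By contrast, the maximal regularity bound is essentially a one-line consequence of the scalar spectral computation once the identity is in hand; it could in fact be proved directly from $\lambda^{n+1}\int_0^\infty (1+s)^n e^{-2s\lambda}\,ds \leq C(1 + \lambda^n)$ without invoking the identity at all, which is reassuring as a consistency check.
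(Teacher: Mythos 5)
Your route is genuinely different from the paper's (the paper never invokes the spectral theorem: it telescopes the energy identities $\frac{d}{dt}\|A^{k/2}e^{-tA}f\|^2=-2\|A^{\frac{k+1}{2}}e^{-tA}f\|^2$ with weights $\frac{(2t)^{k+1}}{(k+1)!}$ and integrates), but there is a concrete problem with your first part: the equality \eqref{eq:lem1-1} cannot come out of your induction, because the boundary terms you mention do not vanish. Your scalar normalisation $\frac{n!}{2^{n+1}\lambda^{n+1}}$ is the value of $\int_0^\infty s^ne^{-2s\lambda}\,ds$, whereas \eqref{eq:lem1-1} involves $\int_0^t$, and for finite $t$
\[
\frac{2^n\lambda^{n+1}}{n!}\int_0^t s^n e^{-2s\lambda}\,ds
=\frac12\Bigl(1-e^{-2t\lambda}\sum_{j=0}^{n}\frac{(2t\lambda)^j}{j!}\Bigr),
\]
so that integrating against $d\mu_f$ gives
\[
\frac{\|e^{-tA}f\|^2}{2}+\frac{2^n}{n!}\int_0^t s^n\|A^{\frac{n+1}{2}}e^{-sA}f\|^2\,ds
=\frac{\|f\|^2}{2}-\frac12\sum_{j=1}^{n}\frac{(2t)^j}{j!}\|A^{j/2}e^{-tA}f\|^2 .
\]
Thus exact equality holds only for $n=0$ (check $H=\R$, $A=1$, $f=1$, $n=1$: the left side is $\tfrac12-te^{-2t}$); for $n\ge 1$ each integration by parts in your inductive step leaves the term $-\frac{2^{n-1}t^n}{n!}\|A^{n/2}e^{-tA}f\|^2$, and these accumulate rather than cancel. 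So as a proof of the statement as written the first part does not close: you must either carry the extra nonnegative sum on the left-hand side or state it as the inequality ``$\le$''. (The same issue is latent in the paper: integrating its telescoping identity carefully produces exactly this extra sum, and only the inequality with $n=2\ell+1$ is used afterwards, so nothing downstream is affected.)

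Your second part, by contrast, is correct and complete, and in fact more direct than the paper's intended deduction: the pointwise bound $\lambda^{n+1}\int_0^\infty(1+s)^ne^{-2s\lambda}\,ds\le C_n(1+\lambda^n)$ for $\lambda>0$ (with $\lambda=0$ trivial), combined with Tonelli and $\int(1+\lambda^n)\,d\mu_f=\|f\|^2+\|A^{n/2}f\|^2$, proves the maximal regularity estimate without any appeal to \eqref{eq:lem1-1}, handles $f\in H$ versus $f\in D(A^{n/2})$ cleanly, and avoids the density/approximation step the paper needs to start from $f\in D(A^{n/2+1})$. Your closing remark that the estimate is independent of the identity is exactly the right instinct; I would restructure the write-up around it and demote the first part to the corrected identity or the inequality it actually yields.
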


\begin{proof}
If $f\in D(A^{n/2+1})$, then $w(t)=e^{-tA}f$ satisfies
\begin{align*}
\frac{d}{dt}\|A^{k/2}w(t)\|^2
&=-2(A^{k/2}w(t),A^{k/2}w'(t))
\\
&=-2(A^{k/2}w(t),A^{k/2+1}w(t))
\\
&=-2\|A^{\frac{k+1}{2}}w(t)\|^2.
\end{align*}
In view of the above computation, in particular, we have
\begin{align*}
\frac{d}{dt}\Big[t^{k+1}\|A^{\frac{k+1}{2}}w(t)\|^2\Big]
&=(k+1) t^{k}\|A^{\frac{k+1}{2}}w(t)\|^2-2t^{k+1}\|A^{\frac{k+2}{2}}w(t)\|^2
\end{align*}
and therefore 
\begin{align*}
\frac{d}{dt}\Big[
\sum_{k=0}^{n-1}\frac{(2t)^{k+1}}{(k+1)!}\|A^{\frac{k+1}{2}}w(t)\|^2\Big]
&=
2\sum_{k=0}^{n-1}\left(
\frac{(2t)^{k}}{k!}\|A^{\frac{k+1}{2}}w(t)\|^2
-
\frac{(2t)^{k+1}}{(k+1)!}\|A^{\frac{k+2}{2}}w(t)\|^2
\right)
\\
&=
\|A^{1/2}w(t)\|^2
-
\frac{(2t)^{n}}{n!}\|A^{\frac{n+1}{2}}w(t)\|^2.
\end{align*}
Integrating it over $[0,t]$, we obtain \eqref{eq:lem1-1}. 
\end{proof}

\section{The first asymptotics}
In this section, 
we will show the strategy how 
to find a asymptotic profile of solutions 
in the simplest situation $m=0$ in Theorem \ref{thm:main}. 

To begin with, we prove the following lemma.
\begin{lemma}\label{lem:0th-en}
Assume that $(u_0,u_1)\in \mathcal{H}$ 
and let $u$ be a unique solution of \eqref{ADW}. 
Then there exists a positive constant $C>0$ such that 
for every $t\geq 0$, 
\begin{align*}
(1+t)E(u,t)+\|u(t)\|^2
+
\int_0^t
\Big((1+s)\|u'(s)\|^2+\|A^{1/2}u(s)\|^2\Big)\,ds
\leq 
C\Big(E(u,0)+\|u_0\|^2\Big).
\end{align*}
\end{lemma}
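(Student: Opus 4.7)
The plan is to combine three standard multiplier identities: the basic energy identity (multiplier $u'$), the ``virial'' identity (multiplier $u$), and the weighted energy identity (multiplier $(1+t)u'$). Formally, these computations require $(u_0,u_1)\in D(A)\times D(A^{1/2})$; for $(u_0,u_1)\in\mathcal H$ one argues by density using Proposition~1 and the closedness of the inequality. So throughout I think of $u$ as a strong solution.

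The core idea is to introduce the modified energy
\begin{equation*}
F(t)=E(u,t)+\tfrac{1}{2}\|u(t)\|^2+(u'(t),u(t))=\tfrac{1}{2}\|u(t)+u'(t)\|^2+\tfrac{1}{2}\|u'(t)\|^2+\|A^{1/2}u(t)\|^2,
\end{equation*}
whose second representation shows that $F$ is nonnegative and controls $\|u(t)\|^2$ without requiring any coercivity of $A$. Using the equation together with $(u'',u)=\frac{d}{dt}(u',u)-\|u'\|^2$ and $(Au,u)=\|A^{1/2}u\|^2$, I would compute
\begin{equation*}
\frac{d}{dt}F(t)=-E(u,t),
\end{equation*}
so that integration in time yields $F(t)+\int_0^t E(u,s)\,ds=F(0)$. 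Since $F(0)\lesssim E(u,0)+\|u_0\|^2$, this immediately gives the bounds $\|u(t)\|^2\lesssim E(u,0)+\|u_0\|^2$ (via the triangle inequality on the first form of $F$) and $\int_0^t \|A^{1/2}u\|^2\,ds\le F(0)$.

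For the weighted part, pairing the equation with $(1+t)u'$ and using the usual identity $\frac{d}{dt}[(1+t)E(u,t)]=E(u,t)-2(1+t)\|u'\|^2$, an integration gives
\begin{equation*}
(1+t)E(u,t)+2\int_0^t(1+s)\|u'(s)\|^2\,ds=E(u,0)+\int_0^t E(u,s)\,ds.
\end{equation*}
Plugging in the bound on $\int_0^t E(u,s)\,ds$ from the previous step and adding together the three estimates on $(1+t)E(u,t)$, $\|u(t)\|^2$, and $\int_0^t\|A^{1/2}u\|^2\,ds$ yields the claim.

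Almost all steps are routine multiplier computations; the only genuinely delicate point is producing a control of $\|u(t)\|^2$ when $A$ is only assumed nonnegative (so zero may belong to the spectrum), which is why the standard ``virial'' identity alone is not enough. The trick is precisely the completing-the-square rewriting of $F(t)$ as $\frac12\|u+u'\|^2+\frac12\|u'\|^2+\|A^{1/2}u\|^2$: this makes $F$ manifestly nonnegative, absorbs the indefinite cross term $(u',u)$, and simultaneously yields $\|u(t)\|^2$ after applying the triangle inequality.
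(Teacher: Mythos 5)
Your proof is correct and follows essentially the same route as the paper: the same three multipliers ($u'$, $u$, and the time-weighted $u'$) and the same completing-the-square rewriting of the quadratic form $E+\tfrac12\|u\|^2+(u',u)$ to get nonnegativity and control of $\|u\|^2$ despite $A$ being only nonnegative. The only cosmetic difference is that you integrate in two stages, whereas the paper combines everything into the single functional $(3+t)E(u;t)+E_*(u;t)$ and integrates once.
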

\begin{proof}
By taking a suitable approximation of $(u_0,u_1)$, 
We may assume $(u_0,u_1)\in D(\mathcal{A})$ without loss of generality.  
Then  we see by a direct computation 
with the equation \eqref{ADW} that 
\begin{align*}
\frac{d}{dt}E(u;t)
&=
2(u',u'')+2(A^{1/2}u',A^{1/2}u)
\\
&=
2(u',u''+Au)
\\
&=
-2\|u'\|^2.
\end{align*}
This yields
\[
\|u'(t)\|^2+\|A^{1/2}u\|^2+2\int_0^t\|u'(s)\|^2\,ds
=\|u_1\|^2+\|A^{1/2}u_0\|^2.
\]
On the other hand, using the equation \eqref{ADW}, we also have the following two 
identities:
\begin{align*}
\frac{d}{dt}\Big[(3+t)E(u;t)\Big]
&=
\|u'\|^2+\|A^{1/2}u\|^2-2(3+t)\|u'\|^2,
\\
\frac{d}{dt}E_*(u;t)
&=2\|u'\|^2+2(u,u''+u')
\\
&=2\|u'\|^2-2\|A^{1/2}u\|^2.
\end{align*}
Summing up the above identities, we deduce
\begin{align}\label{eq:diff-sum}
\frac{d}{dt}\Big[(3+t)E(u;t)+E_*(u;t)\Big]
=-(3+2t)\|u'\|^2-\|A^{1/2}u\|^2.
\end{align}
Noting that the differentiated function in \eqref{eq:diff-sum} can be rewritten as
\begin{align*}
(3+t)E(u;t)+E_*(u;t)
=(1+t)E(u,t)+2\|A^{1/2}u\|^2+2\left\|u'+\frac{1}{2}u\right\|^2+\frac{1}{2}\|u\|^2,
\end{align*}
we obtain the desired inequality. 
\end{proof}

The next lemma enables us 
to divide the solution $u$ of \eqref{ADW} into 
two factors: one reflects the diffusion phenomenon 
and the other reflects an effect of hyperbolicity. 
This treatment is the difference between the one in 
Ikehata--Nishihara \cite{IkNi2003} and ours. 

\begin{lemma}\label{lem:order1}
Let $(u_0,u_1)\in D(A^{1/2})\times D(A^{1/2})$ 
and let $U_1$ be the unique solution of the problem
\begin{align}\label{ADW-order1}
\begin{cases}
U_1''+AU_1+U_1'=Ae^{tA}(u_0+u_1), \quad t>0,
\\
(U_1,U_1')(0)=(0,-u_1).
\end{cases}
\end{align}
Then the function $e^{-tA}(u_0+u_1)+U_1'$ coincides with the solution of \eqref{ADW}. 
\end{lemma}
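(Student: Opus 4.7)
The plan is to set $w(t):=e^{-tA}(u_0+u_1)+U_1'(t)$, verify by direct substitution that $w$ satisfies the Cauchy problem \eqref{ADW}, and then invoke uniqueness from the preceding proposition. (I read the source term on the right-hand side of \eqref{ADW-order1} as $Ae^{-tA}(u_0+u_1)$; the semigroup is $e^{-tA}$.)

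First I would check the initial conditions. From $U_1(0)=0$ and $U_1'(0)=-u_1$ we get $w(0)=(u_0+u_1)-u_1=u_0$. Evaluating \eqref{ADW-order1} at $t=0$ yields
$$
U_1''(0)=-AU_1(0)-U_1'(0)+A(u_0+u_1)=u_1+A(u_0+u_1),
$$
so that $w'(0)=-A(u_0+u_1)+U_1''(0)=u_1$, matching the initial data of $u$.

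Next I would verify the equation. Differentiating \eqref{ADW-order1} in $t$ and using $\frac{d}{dt}e^{-tA}=-Ae^{-tA}$ yields
$$
U_1'''+AU_1'+U_1''=-A^2 e^{-tA}(u_0+u_1).
$$
Substituting $w'=-Ae^{-tA}(u_0+u_1)+U_1''$, $w''=A^2 e^{-tA}(u_0+u_1)+U_1'''$, and $Aw=Ae^{-tA}(u_0+u_1)+AU_1'$ into the hyperbolic operator, the two $Ae^{-tA}(u_0+u_1)$ terms cancel and the remaining contribution $A^2 e^{-tA}(u_0+u_1)+U_1'''+AU_1'+U_1''$ vanishes by the displayed identity. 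Uniqueness of solutions to \eqref{ADW} then forces $w=u$.

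The main obstacle is to justify these formal manipulations at the regularity level stated, $(u_0,u_1)\in D(A^{1/2})\times D(A^{1/2})$, since a priori $U_1''$ and $U_1'''$ need not be classical functions. I would handle this by density: choose a sequence $(u_0^n,u_1^n)\in D(A^k)\times D(A^k)$ with $k$ large enough that every derivation above is strong, verify the identity for the corresponding approximants, and pass to the limit using continuous dependence on initial data for \eqref{ADW} (from the preceding proposition) and for the inhomogeneous problem \eqref{ADW-order1}. The forcing $Ae^{-tA}(u_0+u_1)=A^{1/2}e^{-tA}\bigl(A^{1/2}(u_0+u_1)\bigr)$ lies in $L^1_{\mathrm{loc}}([0,\infty);H)$ thanks to the analytic-semigroup bound $\|A^{1/2}e^{-tA}\|\leq Ct^{-1/2}$, which delivers the required continuity. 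Alternatively, all identities may be read weakly in $[D(A^{1/2})]^*$ as permitted by part (i) of the preceding proposition, so that the verification is entirely algebraic.
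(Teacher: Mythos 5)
Your proposal is correct and follows essentially the same route as the paper: set $w=e^{-tA}(u_0+u_1)+U_1'$, verify that $w$ solves \eqref{ADW} with data $(u_0,u_1)$, conclude by uniqueness, and justify the formal computation by approximating the data with smoother ones and passing to the limit via the Duhamel representation and the bound $\|A^{1/2}e^{-tA}\|\leq Ct^{-1/2}$, exactly as in the paper's proof. The only (harmless) difference is algebraic: the paper never forms $U_1'''$ --- it uses the equation once to write $w'=-AU_1-U_1'$ and then differentiates that identity, so its approximants only need $D(A^2)$ data --- whereas you differentiate \eqref{ADW-order1} in $t$, which simply requires taking your approximating data in $D(A^k)$ with $k$ a bit larger, as you already allow.
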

\begin{proof}
Take $(u_{0\ep}, u_{1\ep})\in D(A^2)\times D(A^2)$
such that $u_{0\ep}\to u_0$ and $u_{1\ep}\to u_1$ in $D(A^{1/2})$ as $\ep \to 0$ 
and consider
\begin{align}\label{ADW-order1ep}
\begin{cases}
U_{1\ep}''+AU_{1\ep}+U_{1\ep}'=Ae^{tA}(u_{0\ep}+u_{1\ep}), \quad t>0,
\\
(U_{1\ep},U_{1\ep}')(0)=(0,-u_{1\ep}).
\end{cases}
\end{align}
Since $Ae^{tA}(u_{0\ep}+u_{1\ep})\in C([0,\infty);D(A))$,
this problem has a unique solution 
\[
U_{1\ep}\in 
C^2([0,\infty);D(A^{1/2}))\cap C^1([0,\infty);D(A))\cap C([0,\infty);D(A^{3/2})).
\]
Moreover, by using Duhamel's principle we can see that 
the solution $U_{1\ep}$ can be represented by 
\[
(U_{1\ep}(t),U_{1\ep}'(t))=e^{-t\mathcal{A}}(0,-u_{1\ep})+
\int_0^te^{-(t-s)\mathcal{A}}(0,Ae^{-sA}(u_{0\ep}+u_{1\ep}))\,ds
\]
which converges to the solution of \eqref{ADW-order1} in $\mathcal{H}$ as $\ep\to 0$:
\[
e^{-t\mathcal{A}}(0,-u_{1})+
\int_0^te^{-(t-s)\mathcal{A}}(0,Ae^{sA}(u_{0}+u_{1}))\,ds=(U_1(t),U_1'(t)).
\]
Setting $v_\ep(t)=e^{-tA}(u_{0\ep}+u_{1\ep})$ and $w_\ep(t)=v_\ep(t)+U_{1\ep}'(t)$, 
we have 
\[
w_\ep(0)=u_{0\ep}+u_{1\ep}-u_{1\ep}=u_{0\ep}
\]
and 
\begin{align*}
w_\ep'(t)&=v_\ep'(t)+U_{1\ep}''(t)
\\
&=v_\ep'(t)+Av_\ep(t)-AU_{1\ep}(t)-U_{1\ep}'(t)
\\
&=-AU_{1\ep}(t)-U_{1\ep}'(t). 
\end{align*}
This yields
\[
\lim_{t\to 0}w_\ep'(t)=-AU_{1\ep}(0)-U_{1\ep}'(0)=u_{1\ep}
\]
and also $w_\ep'\in C^1((0,\infty);\mathcal{H})$ with 
\begin{align*}
w_\ep''(t)
&=-AU_{1\ep}'(t)-(w_\ep(t)-v_\ep(t))'
\\
&=-A(w_\ep(t)-v_\ep(t))-w_\ep'(t)+v_\ep'(t)
\\
&=-Aw_\ep(t)-w_\ep'(t).
\end{align*}
This implies $(w_\ep(t),w_\ep'(t))=e^{-t\mathcal{A}}(u_{0,\ep},u_{1\ep})$ and therefore $(w_\ep(t),w_\ep'(t))\to (u(t),u'(t))$ in $\mathcal{H}$ as $\ep\to 0$.  
Consequently, 
noting that $v_\ep(t)\to e^{-tA}(u_0+u_1)$ in $D(A^{1/2})$ as $\ep\to 0$
we have 
\[
u(t)=\lim_{\ep\to 0}w_\ep(t)=e^{t\Delta}(u_0+u_1)+U_1'(t). 
\]
The proof is complete. 
\end{proof}

Observe that 
a decaying property of $e^{-tA}(u_0+u_1)$ 
cannot be expected as mentioned in Remark \ref{rem:0eigen}. 
In contrast, the other factor $U'$ decays like $(1+t)^{-1/2}$ 
under a suitable assumption. 
To prove the decay property 
we proceed a similar way in the proof of Lemma \ref{lem:0th-en}, 
however, we need Lemma \ref{lem:max-reg} which is 
a kind of maximal regularity result for $e^{-tA}$. 

\begin{lemma}\label{lem:en-U_1}
If $(u_0,u_1)\in D(A^{1/2})\times D(A^{1/2})$, then 
\[
(1+t)E(U_1;t)+\|U_1(t)\|^2\in L^\infty((0,\infty)), 
\quad 
(1+t)\|U_1'(t)\|^2+\|A^{1/2}U_1(t)\|^2\in L^1((0,\infty))
\]
\end{lemma}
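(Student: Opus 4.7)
The plan is to mimic the weighted energy argument of Lemma \ref{lem:0th-en} but now applied to the inhomogeneous equation \eqref{ADW-order1}, with the forcing term $Ae^{-tA}(u_0+u_1)$ handled by the maximal regularity estimate of Lemma \ref{lem:max-reg}. As in Lemma \ref{lem:0th-en}, I would work first with smoother data (the approximating sequence already used in Lemma \ref{lem:order1}) so that all computations can be carried out classically, and then pass to the limit in the final integral inequalities.

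First I would compute, from the equation for $U_1$, the three basic identities
\begin{align*}
\tfrac{d}{dt}E(U_1;t) &= -2\|U_1'\|^2 + 2(U_1',Ae^{-tA}(u_0+u_1)),\\
\tfrac{d}{dt}\bigl[(3+t)E(U_1;t)\bigr] &= \|U_1'\|^2+\|A^{1/2}U_1\|^2-2(3+t)\|U_1'\|^2 + 2(3+t)(U_1',Ae^{-tA}(u_0+u_1)),\\
\tfrac{d}{dt}E_*(U_1;t) &= 2\|U_1'\|^2-2\|A^{1/2}U_1\|^2 + 2(U_1,Ae^{-tA}(u_0+u_1)),
\end{align*}
where $E_*(U_1;t)=\|U_1\|^2+2(U_1,U_1')$ is the same auxiliary functional used before. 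Summing, exactly as in \eqref{eq:diff-sum}, yields
\[
\tfrac{d}{dt}\bigl[(3+t)E(U_1;t)+E_*(U_1;t)\bigr] = -(3+2t)\|U_1'\|^2-\|A^{1/2}U_1\|^2 + 2(3+t)(U_1',Ae^{-tA}v_0)+2(U_1,Ae^{-tA}v_0),
\]
with $v_0=u_0+u_1$.

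Next I would estimate the two source terms by Cauchy--Schwarz with absorption. Using $2(3+t)(U_1',Ae^{-tA}v_0)\leq \tfrac12(3+t)\|U_1'\|^2+2(3+t)\|Ae^{-tA}v_0\|^2$, which is dominated by $-(3+2t)\|U_1'\|^2+C(1+t)\|Ae^{-tA}v_0\|^2$ after absorption, and $2(U_1,Ae^{-tA}v_0)=2(A^{1/2}U_1,A^{1/2}e^{-tA}v_0)\leq \tfrac12\|A^{1/2}U_1\|^2+2\|A^{1/2}e^{-tA}v_0\|^2$, I would arrive at
\[
\tfrac{d}{dt}\bigl[(3+t)E(U_1;t)+E_*(U_1;t)\bigr] \leq -\tfrac{3(1+t)}{2}\|U_1'\|^2 -\tfrac12\|A^{1/2}U_1\|^2 + C\bigl[(1+t)\|Ae^{-tA}v_0\|^2+\|A^{1/2}e^{-tA}v_0\|^2\bigr].
\]
Integrating from $0$ to $t$ and using the initial data $E(U_1;0)=\|u_1\|^2$, $E_*(U_1;0)=0$, Lemma \ref{lem:max-reg} with $n=0$ gives $\int_0^\infty\|A^{1/2}e^{-sA}v_0\|^2\,ds\leq \tfrac12\|v_0\|^2$, and with $n=1$ gives $\int_0^\infty(1+s)\|Ae^{-sA}v_0\|^2\,ds\leq C\|v_0\|_{D(A^{1/2})}^2$; both are finite since $v_0\in D(A^{1/2})$ by the hypothesis on $(u_0,u_1)$.

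Finally, following Lemma \ref{lem:0th-en}, I would rewrite $(3+t)E(U_1;t)+E_*(U_1;t)=(1+t)E(U_1;t)+2\|A^{1/2}U_1\|^2+2\|U_1'+\tfrac12U_1\|^2+\tfrac12\|U_1\|^2\geq (1+t)E(U_1;t)+\tfrac12\|U_1\|^2$. This yields the uniform bound on $(1+t)E(U_1;t)+\|U_1(t)\|^2$ and, from the integrated dissipation, the $L^1((0,\infty))$-bound on $(1+t)\|U_1'\|^2+\|A^{1/2}U_1\|^2$. The only real obstacle is confirming that the $(1+t)$-weighted $L^2$-integral of the forcing is controlled by the $D(A^{1/2})$-norm of the data; this is exactly where Lemma \ref{lem:max-reg} is indispensable and explains why the hypothesis $u_1\in D(A^{1/2})$ (rather than merely $H$) is required.
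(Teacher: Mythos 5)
Your proposal is correct and follows essentially the same route as the paper: differentiate a weighted energy plus the auxiliary functional $E_*(U_1;t)=\|U_1\|^2+2(U_1,U_1')$, absorb the forcing $Ae^{-tA}(u_0+u_1)$ by Cauchy--Schwarz, and invoke Lemma \ref{lem:max-reg} with $n=0$ and $n=1$ (the latter requiring $v_0=u_0+u_1\in D(A^{1/2})$), then integrate and use the positivity rewriting from Lemma \ref{lem:0th-en}. The only difference is cosmetic: you use $(3+t)E(U_1;t)+E_*(U_1;t)$ while the paper takes $(6+t)E(U_1;t)+2E_*(U_1;t)$, and your absorption constants check out, so the argument is sound.
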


\begin{proof}
By using the same approximation as in the proof of Lemma \ref{lem:order1}, 
we assume may assume 
$U_1\in C^2([0,\infty);D(A^{1/2}))\cap C^1([0,\infty);D(A))\cap C([0,\infty);D(A^{3/2}))$. 
Here 
we deduce an energy inequality 
for $U_1$. 
By the equation \eqref{ADW-order1}, we have
\begin{align*}
\frac{d}{dt}E_*(U_1;t)
&=2\|U_1'\|^2+2(U_1,-AU_1+Ae^{-tA}(u_0+u_1))
\\
&=2\|U_1'\|^2-2\|A^{1/2}U_1\|^2+2(U_1,Ae^{-tA}(u_0+u_1))
\\
&\leq 2\|U_1'\|^2-\|A^{1/2}U_1\|^2+\|A^{1/2}e^{-tA}(u_0+u_1)\|^2
\end{align*}
and 
\begin{align*}
\frac{d}{dt}\Big[(6+t)E(U_1;t)\Big]
&=\|U_1'\|^2+\|A^{1/2}U_1\|^2
+2(6+t)(U_1',-U_1'+Ae^{-tA}(u_0+u_1))
\\
&\leq\|U_1'\|^2+\|A^{1/2}U_1\|^2
-(6+t)\|U_1'\|^2
+(6+t)\|Ae^{-tA}(u_0+u_1)\|^2
\end{align*}
In view of Lemma \ref{lem:max-reg} with $f=u_0+u_1\in D(A^{1/2})$, 
we have
\begin{align*}
&\frac{d}{dt}\Big[(6+t)E(U_1;t)+2E_*(U_1;t)\Big]
+(1+t)\|U_1'\|^2+\|A^{1/2}U_1\|^2
\\
&\leq 
(6+t)\|Ae^{-tA}(u_0+u_1)\|^2+2\|A^{1/2}e^{-tA}(u_0+u_1)\|^2
\end{align*}
which is integrable with respect to $t\in(0,\infty)$. 
Integrating the above estimate over $[0,t]$, we obtain all desired estimates. 
\end{proof}

Combining Lemmas \ref{lem:0th-en}, \ref{lem:order1} and \ref{lem:en-U_1}, 
we obtain the following proposition which is the same as 
the assertion of Theorem \ref{thm:main} with $m=0$. 
\begin{proposition}\label{prop:1st-aym}
Assume $(u_0,u_1)\in D(A^{1/2})\times D(A^{1/2})$. Then 
there exists a positive constant $C>0$ such that for every $t\geq 0$, 
\begin{align*}
\|u(t)-e^{-tA}(u_0+u_1)\|\leq C(1+t)^{-\frac{1}{2}}.
\end{align*}
\end{proposition}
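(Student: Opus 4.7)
The plan is to combine the three preceding lemmas in essentially one step. Lemma \ref{lem:order1} provides the key algebraic decomposition
\[
u(t)-e^{-tA}(u_0+u_1)=U_1'(t),
\]
where $U_1$ is the unique solution of the auxiliary problem \eqref{ADW-order1}. Thus the proposition reduces to establishing the pointwise bound $\|U_1'(t)\|\leq C(1+t)^{-1/2}$ for all $t\geq 0$.

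This bound is immediate from Lemma \ref{lem:en-U_1}. Since $E(U_1;t)=\|U_1'(t)\|^2+\|A^{1/2}U_1(t)\|^2\geq \|U_1'(t)\|^2$, the uniform bound $(1+t)E(U_1;t)+\|U_1(t)\|^2\in L^\infty((0,\infty))$ yields
\[
(1+t)\|U_1'(t)\|^2\leq (1+t)E(U_1;t)\leq C
\]
for every $t\geq 0$, and taking square roots gives the conclusion.

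The main conceptual work lies not in the final argument but in the choice of decomposition and in the proof of Lemma \ref{lem:en-U_1}. Indeed, the inhomogeneous term $Ae^{-tA}(u_0+u_1)$ on the right-hand side of the $U_1$-equation does not decay pointwise in general (recall Remark \ref{rem:0eigen}), so a naive energy argument for $U_1$ cannot produce a weighted estimate. The maximal regularity identity in Lemma \ref{lem:max-reg} is precisely what rescues the argument: applied with $f=u_0+u_1\in D(A^{1/2})$ and $n=0,1$, it ensures that both $\|A^{1/2}e^{-tA}(u_0+u_1)\|^2$ and $(6+t)\|Ae^{-tA}(u_0+u_1)\|^2$ are integrable in $t$, so that the forcing terms generated in the energy estimate for $(6+t)E(U_1;t)+2E_*(U_1;t)$ are absorbed. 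Given that machinery, the proposition itself is essentially a one-line consequence of Lemmas \ref{lem:order1} and \ref{lem:en-U_1}.
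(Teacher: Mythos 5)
Your proof is correct and follows exactly the route the paper intends: Lemma \ref{lem:order1} gives $u(t)-e^{-tA}(u_0+u_1)=U_1'(t)$, and the $L^\infty$ bound on $(1+t)E(U_1;t)$ from Lemma \ref{lem:en-U_1} yields $\|U_1'(t)\|\leq C(1+t)^{-1/2}$. The paper also cites Lemma \ref{lem:0th-en} at this point, but as your argument shows it is not actually needed for this proposition, so nothing is missing.
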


\section{Higher order asymptotic expansion}

In this section, we prove Theorem \ref{thm:main} with $m\in\N$ 
which gives higher order asymptotic expansion of
solutions to \eqref{ADW}. 
The philosophy of the proof
is essentially the same as the previous section. 

\subsection{Construction of family of functions for asymptotics}
To consider higher order asymptotic expansions, 
we define the following functions. 

\begin{definition}\label{def:V_m}
For $m\in\N$ and $(u_0,u_1)\in \mathcal{H}$, define $v_0=u_0+u_1$ and 
\begin{align*}
V_0^{(1)}(t)
&=
e^{-tA}v_0, 
\\
V_m^{(1)}(t)
&=
(-1)^{m}\sum_{j=1}^m
\begin{pmatrix}
m-1\\j-1
\end{pmatrix}\frac{(-tA)^j}{j!}e^{-tA}v_0,\quad m\in\N
\\
V_0^{(2)}(t)
&=0, 
\\
V_m^{(2)}(t)
&=
(-1)^{m}
\sum_{k=0}^{m-1}
\begin{pmatrix}
m-1\\k
\end{pmatrix}\frac{(-tA)^k}{k!}e^{-tA}u_1,\quad m\in\N
\end{align*}
and $V_m(t)=V_m^{(1)}(t)+V_m^{(2)}(t)$.
\end{definition}
The family $\{V_{m}\}$ will provide higher order asymptotic profiles;
this will be explained after stating Lemma \ref{lem:orderm} below.
 
The function $V_m$ can be successively found by using the following structure. 
\begin{lemma}\label{lem:asm:succ}
Assume $u_1\in D(A^{1/2})$. Then 
for $m\in \N\cup\{0\}$, $V_{m+1}$ is a unique solution of 
\[
\begin{cases}
V_{m+1}'+AV_{m+1}=-V_{m}', \quad t>0,
\\
V_{m+1}(0)=(-1)^{m+1}u_1.
\end{cases}
\]
\end{lemma}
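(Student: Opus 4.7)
The plan is to verify the ODE and the initial condition by direct computation from the formulas in Definition~\ref{def:V_m}, and then invoke standard uniqueness for the inhomogeneous parabolic problem $V'+AV=g$ driven by the $C_0$-semigroup $e^{-tA}$ to conclude. The initial condition is immediate: every summand of $V_{m+1}^{(1)}$ carries a factor $(-tA)^j$ with $j\geq 1$, so $V_{m+1}^{(1)}(0)=0$, while in $V_{m+1}^{(2)}$ only the $k=0$ term survives and contributes $(-1)^{m+1}u_1$, giving $V_{m+1}(0)=(-1)^{m+1}u_1$.

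For the ODE the key building block is $f_j(t):=\frac{(-tA)^j}{j!}e^{-tA}$, which by direct differentiation satisfies
\begin{equation*}
f_j'(t)+Af_j(t)=-Af_{j-1}(t)\quad (j\geq 1),\qquad f_0'(t)+Af_0(t)=0,
\end{equation*}
with the convention $f_{-1}\equiv 0$. Since $V_m^{(1)}$ is a linear combination of the vectors $f_j(t)v_0$ and $V_m^{(2)}$ of the vectors $f_k(t)u_1$, the operator $\partial_t+A$ acts on each summand by the above formula. I would compute $(\partial_t+A)V_{m+1}^{(1)}$ and $-(V_m^{(1)})'$ as explicit sums over the $f_i(t)v_0$, and likewise for the $u_1$-parts, and then compare coefficients. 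The comparison reduces to Pascal's rule $\binom{m-1}{i-1}+\binom{m-1}{i}=\binom{m}{i}$: two families of terms are produced in $-V_m'$ from the two summands $-Af_j$ and $-Af_{j-1}$ of $f_j'$, each weighted by $\binom{m-1}{\cdot}$, and they combine to yield exactly the $\binom{m}{\cdot}$ weights present in $(\partial_t+A)V_{m+1}$.

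Uniqueness of the solution to the Cauchy problem $V'+AV=-V_m'$, $V(0)=(-1)^{m+1}u_1$, is standard: any two solutions differ by a function $W$ with $W'+AW=0$ and $W(0)=0$, hence $W\equiv 0$ via the contraction semigroup $e^{-tA}$; the hypothesis $u_1\in D(A^{1/2})$, together with the smoothing of $e^{-tA}$ on the $D(A^{k/2})$ scales, provides enough regularity of $-V_m'$ for this mild-solution framework to apply. The main obstacle is nothing conceptual but careful book-keeping of endpoint indices (the sums in $V_m^{(1)}$ start at $j=1$ whereas those in $V_m^{(2)}$ start at $k=0$, and the index ranges increase by one when passing from $m$ to $m+1$), so that Pascal's rule applies cleanly in both pieces and the extreme terms $i=0$ and $i=m$ line up correctly.
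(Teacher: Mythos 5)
Your proposal is correct: the identity $f_j'+Af_j=-Af_{j-1}$ for $f_j(t)=\frac{(-tA)^j}{j!}e^{-tA}$, together with Pascal's rule $\binom{m-1}{i-1}+\binom{m-1}{i}=\binom{m}{i}$ and the endpoint conventions you mention, does make $(\partial_t+A)V_{m+1}=-V_m'$ come out exactly, separately in the $v_0$-part and the $u_1$-part, and the initial condition check is as you say. However, your route differs from the paper's in its mechanics. You verify that the explicitly given $V_{m+1}$ satisfies the Cauchy problem (differentiating $V_{m+1}$ and comparing with $-V_m'$) and then invoke uniqueness for $W'+AW=0$, $W(0)=0$ as a separate step. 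The paper instead never differentiates $V_{m+1}$: it writes the solution by Duhamel's principle as $w_{m+1}(t)=(-1)^{m+1}e^{-tA}u_1-\int_0^t e^{-(t-s)A}V_m'(s)\,ds$, computes $V_m'$ (this is where Pascal's rule enters there too), and then evaluates the convolution integral term by term using the semigroup property $e^{-(t-s)A}e^{-sA}=e^{-tA}$ and $\int_0^t\frac{s^{j-1}}{(j-1)!}\,ds=\frac{t^j}{j!}$, recovering $V_{m+1}$ directly; uniqueness is then automatic from the mild-solution representation rather than argued separately. What your approach buys is that you avoid computing the integral and the bookkeeping stays at the level of one differentiation identity; what the paper's approach buys is that the Duhamel formula already encodes both existence and uniqueness in the mild sense, so one need not worry about in what sense the equation for $V_{m+1}$ holds near $t=0$ (where $V_m'$ involves $Ae^{-tA}u_1$ and the stated hypothesis $u_1\in D(A^{1/2})$ is what keeps this under control). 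If you carry out your plan, make the solution concept explicit (mild versus classical on $(0,\infty)$ with continuity at $t=0$), since your uniqueness step is formulated for the mild framework while your verification is a classical computation valid for $t>0$ by analyticity of $e^{-tA}$.
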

\begin{proof}
We use the Duhamel principle. The solution of 
\[
\begin{cases}
w_{m+1}'+Aw_{m+1}=-V_{m}', \quad t>0,
\\
w_{m+1}(0)=(-1)^{m+1}u_1.
\end{cases}
\]
can be written by the formula
\begin{align*}
w_{m+1}(t)=e^{-tA}\Big[(-1)^{m+1}u_1\Big]
-\int_0^te^{-(t-s)A}V_{m}'(s)\,ds.
\end{align*}
For the case $m=0$, we see from the semigroup property that 
\begin{align*}
w_1(t)
&=e^{-tA}\Big[-u_1\Big]-\int_0^te^{-(t-s)A}V_{0}'(s)\,ds
\\
&=-e^{-tA}u_1-\int_0^te^{-(t-s)A}(-Ae^{sA}v_0)\,ds
\\
&=-e^{-tA}u_1+tAe^{-tA}v_0
\\
&=V_1^{(1)}(t)+V_1^{(2)}(t).
\end{align*}
This gives the assertion with $m=0$. For the case $m\in\N$, 
observe that 
\begin{align*}
&\frac{d}{dt}V_{m}^{(1)}(t)
\\
&=
(-1)^{m}
\left[\sum_{j=1}^{m}
\begin{pmatrix}
m-1\\j-1
\end{pmatrix}\frac{(-tA)^{j-1}}{(j-1)!}(-A)e^{-tA}v_0
+
\sum_{j=1}^{m}
\begin{pmatrix}
m-1\\j-1
\end{pmatrix}\frac{(-tA)^{j}}{j!}(-A)e^{-tA}v_0
\right]
\\
&=
(-1)^{m}
\left[\sum_{j=1}^{m}
\begin{pmatrix}
m-1\\j-1
\end{pmatrix}\frac{(-tA)^{j-1}}{(j-1)!}(-A)e^{-tA}v_0
+
\sum_{j=2}^{m+1}
\begin{pmatrix}
m-1\\j-2
\end{pmatrix}\frac{(-tA)^{j-1}}{(j-1)!}(-A)e^{-tA}v_0
\right]
\\
&=
(-1)^{m}
\sum_{j=1}^{m+1}
\begin{pmatrix}
m\\j-1
\end{pmatrix}\frac{(-tA)^{j-1}}{(j-1)!}(-A)e^{-tA}v_0.
\end{align*}
Therefore
\begin{align*}
\int_0^te^{-(t-s)A}\Big(\frac{d}{dt}V_{m}^{(1)}(s)\Big)\,ds
&=
\int_0^te^{-(t-s)A}
\left((-1)^{m}
\sum_{j=1}^{m+1}
\begin{pmatrix}
m\\j-1
\end{pmatrix}\frac{(-sA)^{j-1}}{(j-1)!}(-A)e^{-sA}v_0
\right)\,ds
\\
&=
(-1)^{m}
\sum_{j=1}^{m+1}
\begin{pmatrix}
m\\j-1
\end{pmatrix}
\left(
\int_0^t
\frac{s^{j-1}}{(j-1)!}\,ds
\right)(-A)^{j}e^{-tA}v_0
\\
&=
(-1)^{m}
\sum_{j=1}^{m+1}
\begin{pmatrix}
m\\j-1
\end{pmatrix}
\frac{(-tA)^{j}}{j!}e^{-tA}v_0
\\
&=-V_{m+1}^{(1)}(t). 
\end{align*}
Similarly, we have
\begin{align*}
&\frac{d}{dt}V_{m}^{(2)}(t)
\\
&=
(-1)^{m}
\left[\sum_{k=1}^{m-1}
\begin{pmatrix}
m-1\\k
\end{pmatrix}\frac{(-tA)^{k-1}}{(k-1)!}(-A)e^{-tA}u_1
+
\sum_{k=0}^{m-1}
\begin{pmatrix}
m-1\\ k
\end{pmatrix}\frac{(-tA)^{k}}{k!}(-A)e^{-tA}u_1
\right]
\\
&=
(-1)^{m}
\left[\sum_{k=1}^{m-1}
\begin{pmatrix}
m-1\\k
\end{pmatrix}\frac{(-tA)^{k-1}}{(k-1)!}(-A)e^{-tA}u_1
+
\sum_{k=1}^{m}
\begin{pmatrix}
m-1\\k-1
\end{pmatrix}\frac{(-tA)^{k-1}}{(k-1)!}(-A)e^{-tA}u_1
\right]
\\
&=
(-1)^{m}
\sum_{k=1}^{m}
\begin{pmatrix}
m\\k
\end{pmatrix}\frac{(-tA)^{k-1}}{(k-1)!}(-A)e^{-tA}u_1
\end{align*}
and then 
\begin{align*}
\int_0^te^{-(t-s)A}\Big(\frac{d}{dt}V_{m}^{(2)}(s)\Big)\,ds
&=
(-1)^{m}
\sum_{k=1}^{m}
\begin{pmatrix}
m\\k
\end{pmatrix}
\frac{(-tA)^{k}}{k!}e^{-tA}u_1
\\
&=-V_{m+1}^{(1)}(t)+(-1)^{m+1}e^{-tA}u_1. 
\end{align*}
Combining the above two identities, we obtain the assertions for $m\in\N$. 
\end{proof}

We prepare the formula of higher order derivative of the functions $V_m$. 
\begin{lemma}\label{lem:V-derivative}
Let $V_m$ be as in Definition \ref{def:V_m}.
For every $m, \ell \in\N$, one has
\[
\frac{d^\ell}{dt^\ell}V_m(t)
=
(-A)^\ell V_{m,\ell}(t), 
\]
where
\[V_{m,\ell}(t)
=
(-1)^m\left[
\sum_{j=0}^m
\begin{pmatrix}
\ell+m-1\\\ell+j-1
\end{pmatrix}\frac{(-tA)^j}{j!}e^{-tA}v_0
+
\sum_{k=0}^{m-1}
\begin{pmatrix}
\ell+m-1\\\ell+k
\end{pmatrix}\frac{(-tA)^k}{k!}e^{-tA}u_1
\right].
\]
\end{lemma}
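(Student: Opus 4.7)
The plan is straightforward induction on $\ell \geq 0$. For the base case $\ell=0$, the formula reduces to $V_{m,0}(t)=V_m(t)$, which I verify by direct comparison with Definition~\ref{def:V_m}: adopting the convention $\binom{n}{k}=0$ whenever $k<0$ or $k>n$, the $j=0$ contribution to the $v_0$-sum in $V_{m,0}$ vanishes (since $\binom{m-1}{-1}=0$), and the remaining terms match $V_m^{(1)}+V_m^{(2)}$ exactly. Alternatively I could take $\ell=1$ as the base, in which case the required identity is precisely what has already been computed inside the proof of Lemma~\ref{lem:asm:succ}—a useful sanity check on the inductive machinery.

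For the inductive step, assuming $\frac{d^\ell}{dt^\ell}V_m=(-A)^\ell V_{m,\ell}$, differentiating once more gives $\frac{d^{\ell+1}}{dt^{\ell+1}}V_m=(-A)^\ell V'_{m,\ell}$, so it suffices to prove the recursion $V'_{m,\ell}=-A\,V_{m,\ell+1}$. The central ingredient is the elementary identity
\[
\frac{d}{dt}\left[\frac{(-tA)^j}{j!}e^{-tA}g\right]=(-A)\left[\frac{(-tA)^{j-1}}{(j-1)!}+\frac{(-tA)^j}{j!}\right]e^{-tA}g
\]
(with the $(j-1)$-summand interpreted as zero when $j=0$), applied termwise in the definition of $V_{m,\ell}$. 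After pulling out the common factor $-A$, re-indexing the sum produced by the $\frac{(-tA)^{j-1}}{(j-1)!}$ contributions, and gathering like powers of $(-tA)$, the coefficient of $\frac{(-tA)^j}{j!}e^{-tA}v_0$ in $(-A)^{-1}V'_{m,\ell}$ becomes
\[
(-1)^m\Bigl[\binom{\ell+m-1}{\ell+j-1}+\binom{\ell+m-1}{\ell+j}\Bigr]=(-1)^m\binom{\ell+m}{\ell+j}
\]
by Pascal's identity, which exactly matches the coefficient prescribed by $V_{m,\ell+1}$. The $u_1$-sum is handled identically and produces $(-1)^m\binom{\ell+m}{\ell+k+1}$ at index $k$, again in agreement with $V_{m,\ell+1}$.

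The only mildly delicate point is the bookkeeping at the boundary indices—$j=0$ and $j=m$ for the $v_0$-sum, $k=0$ and $k=m-1$ for the $u_1$-sum—where only one of the two sums arising after differentiation contributes. With the convention that out-of-range binomial coefficients vanish, Pascal's rule still delivers the correct combined coefficient in each of these degenerate cases, so the argument goes through uniformly, and this is essentially the only place where care is required. The regularity of $v_0=u_0+u_1$ and $u_1$ needed for the termwise differentiations and the application of $A^\ell$ is guaranteed by the standing hypothesis $(u_0,u_1)\in D(A^{m+1/2})\times D(A^{m+1/2})$ of Theorem~\ref{thm:main}, or, if a fully general statement is desired, by the approximation procedure already employed in Lemmas~\ref{lem:order1} and~\ref{lem:en-U_1}.
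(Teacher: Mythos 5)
Your proposal is correct and takes essentially the same route as the paper: the paper's proof merely points to the $\ell=1$ identities already established inside Lemma \ref{lem:asm:succ} and asserts that the general case follows ``in the same manner,'' which is precisely the termwise differentiation, re-indexing and Pascal-identity computation you carry out explicitly in your induction on $\ell$. You have simply made explicit the induction and the boundary-index bookkeeping that the paper leaves to the reader.
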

\begin{proof}
In Lemma \ref{lem:asm:succ}, the following inequalities are already proved:
\begin{align*}
\frac{d}{dt}V_{m}^{(1)}(t)
&=
(-1)^{m}
(-A)\left(\sum_{j=0}^{m}
\begin{pmatrix}
1+(m-1)\\1+(j-1)
\end{pmatrix}\frac{(-tA)^{j}}{j!}e^{-tA}v_0\right),
\\
\frac{d}{dt}V_{m}^{(2)}(t)
&=
(-1)^{m}
(-A)\left(\sum_{k=0}^{m-1}
\begin{pmatrix}
1+(m-1)\\1+k
\end{pmatrix}\frac{(-tA)^{k}}{k!}e^{-tA}u_1\right).
\end{align*}
As the same manner as in these proofs, we can directly check the desired assertion.  
\end{proof}

The following lemma gives integrability and boundness of functions $V_m$ 
with respect to $t\in(0,\infty)$. 
\begin{lemma}\label{lem:max-reg.V_m}
Let $(u_0,u_1)\in D(A^{\ell+1/2})\times D(A^{\ell+1/2})$. 
Then for every $m,\ell\in\N\cup\{0\}$ 
\begin{gather}
\label{eq:bdd.V_m}
(1+t)^{2\ell}\left\|\frac{d^{\ell}}{dt^{\ell}}V_{m}(t)\right\|^2
\in L^\infty((0,\infty)),
\\
\label{eq:max-reg.V_m}
(1+t)^{2\ell+1}\left\|\frac{d^{\ell+1}}{dt^{\ell+1}}V_{m}(t)\right\|^2
+(1+t)^{2\ell}\left\|A^{\ell+1/2}V_{m,\ell+1}(t)\right\|^2\in L^1((\infty)).
\end{gather}
\end{lemma}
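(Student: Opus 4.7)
The plan is to reduce everything to scalar estimates on single summands via the spectral theorem for $A$. By Lemma \ref{lem:V-derivative},
\[
\frac{d^\ell}{dt^\ell}V_m(t)=(-A)^\ell V_{m,\ell}(t),
\]
and $V_{m,\ell}(t)$ is a finite linear combination (with coefficients depending only on $\ell,m$) of terms $\tfrac{(-tA)^j}{j!}e^{-tA}g$ with $g\in\{v_0,u_1\}$ and $0\leq j\leq m$. Since $v_0,u_1\in D(A^{\ell+1/2})$, both \eqref{eq:bdd.V_m} and \eqref{eq:max-reg.V_m} reduce to controlling the scalar quantity
\[
I_{\ell,j,g}(t):=\bigl\|t^{j}A^{\ell+j}e^{-tA}g\bigr\|^{2}=\int_{0}^{\infty}t^{2j}\lambda^{2(\ell+j)}e^{-2t\lambda}\,d\mu_{g}(\lambda),
\]
where $d\mu_{g}$ denotes the spectral measure of $g$ and $0\leq j\leq m$.

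For \eqref{eq:bdd.V_m}, I would use the pointwise inequality
\[
(1+t)^{2\ell}t^{2j}\lambda^{2(\ell+j)}e^{-2t\lambda}=\bigl((1+t)\lambda\bigr)^{2\ell}(t\lambda)^{2j}e^{-2t\lambda}\leq C_{\ell,j}\bigl(1+\lambda^{2\ell}\bigr),
\]
which follows from the elementary bound $(1+t)^{2\ell}\leq 2^{2\ell}(1+t^{2\ell})$ and the semigroup smoothing $\sup_{s\geq 0}s^{k}e^{-s}<\infty$ applied to $(t\lambda)^{2j}e^{-t\lambda}$ and to $(t\lambda)^{2\ell+2j}e^{-t\lambda}$. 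Integrating against $d\mu_{g}$ and using the inclusion $D(A^{\ell+1/2})\subset D(A^{\ell})$ yields $(1+t)^{2\ell}I_{\ell,j,g}(t)\leq C(\|g\|^{2}+\|A^{\ell}g\|^{2})$ uniformly in $t\geq 0$, so \eqref{eq:bdd.V_m} follows by summing finitely many terms.

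For \eqref{eq:max-reg.V_m}, the same spectral reduction and Fubini give
\[
\int_{0}^{\infty}(1+t)^{2\ell+1}I_{\ell+1,j,g}(t)\,dt=\int_{0}^{\infty}\Phi_{j}(\lambda)\,d\mu_{g}(\lambda),\qquad \Phi_{j}(\lambda):=\int_{0}^{\infty}(1+t)^{2\ell+1}t^{2j}\lambda^{2(\ell+1+j)}e^{-2t\lambda}\,dt,
\]
with the analogous statement for $\|A^{\ell+1/2}V_{m,\ell+1}(t)\|^{2}$ obtained by replacing $2\ell+1$ by $2\ell$ in the weight and $\ell+1$ by $\ell+\tfrac{1}{2}$ in the $A$-power. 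The change of variables $s=2t\lambda$, followed by a binomial expansion of $(1+s/(2\lambda))^{2\ell+1}$, transforms $\Phi_{j}(\lambda)$ into a polynomial of degree $2\ell+1$ in $\lambda$:
\[
\Phi_{j}(\lambda)=\sum_{k=0}^{2\ell+1}C_{k}\,\lambda^{2\ell+1-k},
\]
whose integral against $d\mu_{g}$ equals $\sum_{k}C_{k}\|A^{(2\ell+1-k)/2}g\|^{2}$ and is finite by interpolation under the hypothesis $g\in D(A^{\ell+1/2})$. The case $j=0$ is simply Lemma \ref{lem:max-reg} applied with $n=2\ell+1$ (respectively $n=2\ell$ for the second integral).

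The only genuine obstacle is combinatorial bookkeeping, together with the observation that the $\tfrac{1}{2}$-regularity margin in $D(A^{\ell+1/2})$ is tight: the binomial expansion produces a leading term $C_{0}\lambda^{2\ell+1}$ whose $d\mu_{g}$-integrability is exactly the assumption $g\in D(A^{\ell+1/2})$, while the polynomial factor $t^{2j}$ coming from the $(-tA)^{j}$ blocks in the definition of $V_{m,\ell}$ must be absorbed through the smoothing $(t\lambda)^{2j}e^{-t\lambda}\leq C_{j}$ so that it does not inflate the maximal spectral power beyond $\lambda^{2\ell+1}$.
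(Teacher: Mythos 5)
Your proposal is correct, but it proves the lemma by a different route than the paper. The paper's own proof stays at the operator level: it writes $e^{-tA}=e^{-\frac{t}{2}A}e^{-\frac{t}{2}A}$, absorbs every polynomial factor $\frac{(-tA)^j}{j!}$ into the uniformly bounded operators $(tA)^je^{-\frac{t}{2}A}$, so that the two quantities in \eqref{eq:max-reg.V_m} are dominated by $(1+t)^{2\ell+1}\|A^{\ell+1}e^{-\frac{t}{2}A}g\|^2$ and $(1+t)^{2\ell}\|A^{\ell+1/2}e^{-\frac{t}{2}A}g\|^2$ with $g\in\{v_0,u_1\}$, and then simply invokes the weighted maximal-regularity estimate of Lemma \ref{lem:max-reg} with $n=2\ell+1$ (resp.\ $n=2\ell$); no spectral measures appear. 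You instead reduce everything to scalar integrals via the spectral theorem, compute the time integrals explicitly by the substitution $s=2t\lambda$ and a binomial expansion, and identify the resulting powers of $\lambda$ with norms $\|A^{(2\ell+1-k)/2}g\|^2$ controlled by the moment inequality. Your first part (the $L^\infty$ bound) uses exactly the same smoothing mechanism as the paper, just phrased pointwise in $\lambda$; your second part replaces the appeal to Lemma \ref{lem:max-reg} (except at $j=0$) by a self-contained computation, which has the merit of making visible that the regularity $g\in D(A^{\ell+1/2})$ is exactly what the leading term $\lambda^{2\ell+1}$ requires, whereas the paper's argument is shorter and recycles the abstract lemma without any bookkeeping. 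Two harmless points to tidy up: the identity $\Phi_j(\lambda)=\sum_{k=0}^{2\ell+1}C_k\lambda^{2\ell+1-k}$ holds only for $\lambda>0$ (at $\lambda=0$ one has $\Phi_j(0)=0$, so on a possible point mass of $\mu_g$ at the origin the "equals" should be "$\leq$", which is all you need), and you should note that the case $m=0$ or $\ell=0$, not literally covered by the statement of Lemma \ref{lem:V-derivative}, is handled by the same (trivial) formula for $\frac{d^{\ell}}{dt^{\ell}}V_0$.
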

\begin{proof}
By Lemma \ref{lem:V-derivative} and the boundedness of operators 
$(tA)^ne^{-\frac{t}{2}A}$ for $n\in\N$, 
we have
\begin{align*}
(1+t)^{2\ell+1}\left\|\Big(\frac{d}{dt}\Big)^{\ell+1}V_{m}(t)\right\|^2
&\leq 
C(1+t)^{2\ell+1}
\left(
\left\|A^{\ell+1}e^{-\frac{t}{2}A}v_0\right\|^2
+
\left\|A^{\ell+1}e^{-\frac{t}{2}A}u_1\right\|^2
\right),
\\
(1+t)^{2\ell}\left\|A^{\ell+1/2}V_{m,\ell+1}(t)\right\|^2
&\leq 
C(1+t)^{2\ell}
\left(
\left\|A^{\ell+1/2}e^{-\frac{t}{2}A}v_0\right\|^2
+
\left\|A^{\ell+1/2}e^{-\frac{t}{2}A}u_1\right\|^2
\right).
\end{align*}
Applying Lemma \ref{lem:max-reg} with $n=2\ell+1$, 
we obtain the desired estimate.
\end{proof}

\subsection{Proof of higher order asymptotic expansion}

Here we provide the relation between the solution $u$ of \eqref{ADW} 
and the family $V_m$.
\begin{lemma}\label{lem:orderm}
Assume $(u_0,u_1)\in D(A^{1/2})\times D(A^{1/2})$. 
Let $\{U_m\}_{m\in\N}$ be a weak solution of 
\begin{align}\label{eq:aux-U_m}
\begin{cases}
U_{m+1}''+AU_{m+1}+U_{m+1}'=-V_{m}', \quad t>0,
\\
(U_{m+1},U_{m+1}')(0)=(0,(-1)^{m+1}u_1).
\end{cases}
\end{align}
Then $U_{m}=V_m+U_{m+1}'$. 
\end{lemma}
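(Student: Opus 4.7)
The plan is to verify that the function $W := V_m + U_{m+1}'$ satisfies the same damped hyperbolic Cauchy problem as $U_m$ itself, and then invoke uniqueness. Fix $m\in\N$; shifting the index $m \mapsto m-1$ in \eqref{eq:aux-U_m}, $U_m$ solves
\[
U_m''+AU_m+U_m'=-V_{m-1}',\qquad (U_m,U_m')(0)=(0,(-1)^m u_1),
\]
so this is what I need to verify for $W$.

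For the initial conditions, direct evaluation in Definition \ref{def:V_m} at $t=0$ kills every term of $V_m^{(1)}$ (each carries a factor $(-tA)^j$ with $j\ge 1$) and keeps only the $k=0$ summand of $V_m^{(2)}$, yielding $V_m(0)=(-1)^m u_1$. Together with $U_{m+1}'(0)=(-1)^{m+1}u_1$ this gives $W(0)=0$. For $W'(0)$, I would evaluate \eqref{eq:aux-U_m} at $t=0$ to get $U_{m+1}''(0)=(-1)^m u_1-V_m'(0)$, so that $W'(0)=V_m'(0)+U_{m+1}''(0)=(-1)^m u_1$, matching $U_m'(0)$.

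For the equation, differentiating \eqref{eq:aux-U_m} once in $t$ yields $U_{m+1}'''+AU_{m+1}'+U_{m+1}''=-V_m''$, whence
\[
W''+AW+W' = (V_m''+AV_m+V_m')+(U_{m+1}'''+AU_{m+1}'+U_{m+1}'') = AV_m+V_m'.
\]
Lemma \ref{lem:asm:succ} identifies the right-hand side with $-V_{m-1}'$, so $W$ satisfies the same equation as $U_m$. Uniqueness for the linear damped hyperbolic Cauchy problem then forces $W=U_m$, which is the desired identity.

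The main technical obstacle is a regularity issue: the derivation above formally uses the third time-derivative of $U_{m+1}$, which is not available for a mere weak solution. I would handle this exactly as in the proof of Lemma \ref{lem:order1}, by approximating $(u_0,u_1)$ by a sequence $(u_{0,\ep},u_{1,\ep})\in D(A^{N})\times D(A^N)$ with $N$ sufficiently large that the corresponding $U_{m+1,\ep}$ and $V_{m,\ep}$ are classical, running the algebraic manipulations for the approximations where every term is legitimate, and then passing to the limit in $\mathcal{H}$ using continuous dependence of the solutions of \eqref{eq:aux-U_m} and of $V_m$ on the initial data.
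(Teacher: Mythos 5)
Your proposal is correct and takes essentially the same route as the paper: you verify that $W=V_m+U_{m+1}'$ satisfies the Cauchy problem defining $U_m$ (same initial-value checks, same use of Lemma \ref{lem:asm:succ} to turn $AV_m+V_m'$ into $-V_{m-1}'$), conclude by uniqueness, and justify the computation by approximation. The only cosmetic difference is that you differentiate the equation for $U_{m+1}$ (formally invoking $U_{m+1}'''$), whereas the paper substitutes $U_{m+1}''$ from the equation before differentiating and so stays at the level of second derivatives; after the approximation step both are legitimate.
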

\begin{proof}
The strategy of the proof is essentially the same as Lemma \ref{lem:order1}.
By taking a suitable approximation of $(u_0, u_1)$, 
we can assume that $U_m$ is smooth enough without loss of generality.
Putting $w=V_m+U_{m+1}'$, we see from Lemma \ref{lem:asm:succ} that 
\[
w(0)=V_m(0)+U_{m+1}'(0)=(-1)^mu_1+(-1)^{m+1}u_1=0
\]
and 
\begin{align*}
w'(t)
&=V_m'(t)+U_{m+1}''(t)
\\
&=-AU_{m+1}(t)-U_{m+1}'(t).
\end{align*}
This gives 
\[
\lim_{t\to 0}w'(t)=-AU_{m+1}(0)-U_{m+1}'(0)=(-1)^mu_1
\]
and also
\begin{align*}
w''(t)
&=-AU_{m+1}'(t)+(V_m(t)-w(t))'
\\
&=-A(w(t)-V_m(t))+V_m'(t)-w'(t)
\\
&=-Aw(t)-w'(t)+(AV_m(t)+V_m'(t))
\\
&=-Aw(t)-w'(t)-V_{m-1}'(t),
\end{align*}
where we have used Lemma \ref{lem:asm:succ} at the last line. 
Noting the uniqueness of solution to \eqref{eq:aux-U_m}, we obtain $w=U_m$. 
\end{proof}
Formally speaking, from the viewpoint of 
Lemmas \ref{lem:order1} and \ref{lem:orderm} 
we can see that the solution $u$ of \eqref{ADW} can be decomposed as 
\begin{align*}
u(t)
&=V_0(t)+u(t)-V_0(t)
\\
&=V_0(t)+U_1'(t)
\\
&=V_0(t)+V_1(t)' + (V_1(t)-U_1(t))'
\\
&=V_0(t)+V_1'(t) + U_2''(t)
\\
&\,\,\,\vdots
\\
&=\sum_{\ell=0}^m \frac{d^\ell}{dt^\ell}V_{\ell}(t)+\frac{d^{m+1}}{dt^{m+1}}U_{m+1}(t).
\end{align*}
By virtue of \eqref{eq:bdd.V_m} in Lemma \ref{lem:max-reg.V_m}, we already have 
for every $\ell=0,\ldots, m$, 
\[
\left\|\frac{d^\ell}{dt^\ell}V_{\ell}(t)\right\|\leq C(1+t)^{-\ell}.
\]
To ensure that the above description 
is the asymptotic expansion, 
we finally prove the following proposition. 
\begin{proposition}\label{prop:U_mm}
Assume that $(u_0,u_1)\in D(A^{m+1/2})\times D(A^{m+1/2})$. Then 
\[
\left\|\frac{d^{m+1}}{dt^{m+1}}U_{m+1}(t)\right\|\leq (1+t)^{-m-\frac{1}{2}}.
\]
\end{proposition}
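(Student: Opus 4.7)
My plan is to prove Proposition \ref{prop:U_mm} by induction on $k = 0, 1, \ldots, m$, simultaneously establishing, for $Z_k := U_{m+1}^{(k)}$, the pair of estimates
\begin{gather*}
(\mathrm{A}_k)\colon\ (1+t)^{2k+1} E(Z_k;t) + (1+t)^{2k}\|Z_k(t)\|^2 \in L^\infty((0,\infty)), \\
(\mathrm{B}_k)\colon\ (1+t)^{2k+1}\|Z_k'(t)\|^2 + (1+t)^{2k}\|A^{1/2}Z_k(t)\|^2 \in L^1((0,\infty)).
\end{gather*}
Specializing to $k=m$ in $(\mathrm{A}_m)$ immediately gives $\|U_{m+1}^{(m+1)}(t)\| \leq C(1+t)^{-m-1/2}$, the desired conclusion. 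Differentiating the equation \eqref{eq:aux-U_m} $k$ times in $t$, each $Z_k$ satisfies the damped wave equation
$$Z_k''(t) + AZ_k(t) + Z_k'(t) = -V_m^{(k+1)}(t),$$
whose initial values $Z_k(0), Z_k'(0)$ are polynomial expressions in $A$ of order $\leq k$ applied to $u_0, u_1$, and hence lie in $D(A^{1/2}) \times H$ under the standing hypothesis $(u_0, u_1) \in D(A^{m+1/2}) \times D(A^{m+1/2})$; a smooth-approximation argument in the spirit of Lemma \ref{lem:order1} justifies the computations below.

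The base case $k = 0$ is essentially Lemma \ref{lem:en-U_1} applied to $Z_0 = U_{m+1}$, the required input $\int_0^\infty(1+s)\|V_m'(s)\|^2\,ds < \infty$ being provided by Lemma \ref{lem:max-reg.V_m} with $\ell = 0$. For the inductive step from $(\mathrm{A}_{k-1}), (\mathrm{B}_{k-1})$ to $(\mathrm{A}_k), (\mathrm{B}_k)$, I would mimic the proof of Lemma \ref{lem:en-U_1} with the time-weighted Lyapunov functional
$$\mathcal{L}_k(t) = (C_k + t)^{2k+1} E(Z_k;t) + \alpha_k(1+t)^{2k}\bigl[2(Z_k, Z_k') + \|Z_k\|^2\bigr],$$
with $C_k, \alpha_k$ chosen so that, after an algebraic rewriting analogous to the one at the end of the proof of Lemma \ref{lem:0th-en}, $\mathcal{L}_k$ is nonnegative and comparable to $(1+t)^{2k+1}E(Z_k) + (1+t)^{2k}\|Z_k\|^2$. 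Computing $\frac{d}{dt}\mathcal{L}_k$ using the equation for $Z_k$, the dissipative contributions are $-c_k\bigl[(1+t)^{2k+1}\|Z_k'\|^2 + (1+t)^{2k}\|A^{1/2}Z_k\|^2\bigr]$, and the forcing terms $(C_k+t)^{2k+1}(Z_k', V_m^{(k+1)})$ and $\alpha_k(1+t)^{2k}(Z_k, V_m^{(k+1)})$ are controllable by Cauchy--Schwarz combined with Lemma \ref{lem:max-reg.V_m} (taking $\ell = k$).

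The key new contribution absent in the base case comes from the term $2k\alpha_k(1+t)^{2k-1}\bigl[2(Z_k, Z_k') + \|Z_k\|^2\bigr]$ produced by differentiating the weight in front of the $E_*$-part. Since $Z_k = Z_{k-1}'$, the dangerous piece $2k\alpha_k(1+t)^{2k-1}\|Z_k\|^2 = 2k\alpha_k(1+t)^{2k-1}\|Z_{k-1}'\|^2$ is integrable precisely by the hypothesis $(\mathrm{B}_{k-1})$, while the cross term $4k\alpha_k(1+t)^{2k-1}(Z_k, Z_k')$ splits via Cauchy--Schwarz with weight $\lambda = (1+t)^{-1}$ into a part absorbable by the dissipation $(1+t)^{2k+1}\|Z_k'\|^2$ and a part again covered by $(\mathrm{B}_{k-1})$. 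This is the structural reason why $(\mathrm{A}_k)$ and $(\mathrm{B}_k)$ must be propagated in tandem: the integrability statement $(\mathrm{B}_{k-1})$ at the previous level is precisely what controls the weight-derivative errors at the current level, and the dissipation at level $k$ then produces exactly the integrability needed to feed back into level $k+1$.

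The main obstacle, in my view, is bookkeeping rather than conceptual: one must choose $C_k, \alpha_k$ so that $\mathcal{L}_k$ is manifestly nonnegative and comparable to the target quantity, and verify that every positive error produced in $d\mathcal{L}_k/dt$ is either absorbable into the dissipation (for $t$ large, after choosing $C_k$ sufficiently large), or controllable by the forcing estimate of Lemma \ref{lem:max-reg.V_m}, or by the previous induction step $(\mathrm{B}_{k-1})$. Controlling the initial value $\mathcal{L}_k(0)$ reduces to bounding polynomials in $A$ of degree $\leq k$ applied to $u_0, u_1$, which is routine under the regularity hypothesis $(u_0, u_1) \in D(A^{m+1/2}) \times D(A^{m+1/2})$ but requires some explicit computation via the recursion $Z_k'(0) = -AZ_{k-1}(0) - Z_{k-1}'(0) - V_m^{(k)}(0)$.
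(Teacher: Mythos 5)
Your proposal is correct and follows essentially the same route as the paper: the paper packages your inductive step as the standalone Lemma \ref{lem:en-induction} (with the same Lyapunov functional $(t_0+t)^{2\ell+1}E(w,t)+(\ell+2)(t_0+t)^{2\ell}E_*(w,t)$, the same absorption by choosing the shift large, and the hypothesis $(1+t)^{2\ell-1}\|w\|^2\in L^1$ playing exactly the role of your $(\mathrm{B}_{k-1})$), applied successively to $\frac{d^k}{dt^k}U_{m+1}$ with the forcing controlled by Lemma \ref{lem:max-reg.V_m} and the base case given by Lemma \ref{lem:en-U_ell}. Your tandem induction $(\mathrm{A}_k)$, $(\mathrm{B}_k)$ is just an explicit reorganization of that same chain.
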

The first step of the proof of Proposition \ref{prop:U_mm} 
is to show the following lemma. 
\begin{lemma}\label{lem:en-U_ell}
If $(u_0,u_1)\in D(A^{1/2})\times D(A^{1/2})$, then 
\[
(1+t)E(U_{m+1};t)+\|U_{m+1}(t)\|^2\in L^\infty, 
\quad 
(1+t)\|U_{m+1}'(t)\|^2+\|A^{1/2}U_{m+1}(t)\|^2\in L^1
\]
\end{lemma}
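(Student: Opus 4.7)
The plan is to imitate the proof of Lemma \ref{lem:en-U_1}, using exactly the same two multipliers but with the forcing $Ae^{-tA}(u_0+u_1)=-V_0'$ replaced by $-V_m'$. As there, a preliminary approximation of $(u_0,u_1)$ by data in $D(A^k)\times D(A^k)$ for $k$ large renders $U_{m+1}^\ep$ smooth enough for the energy computations to be legitimate; the final estimates are then preserved in the limit in $\mathcal{H}$.

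The two multiplier identities I would derive are the weighted hyperbolic estimate
\[
\frac{d}{dt}\bigl[(6+t)E(U_{m+1};t)\bigr]\leq \|U_{m+1}'\|^2+\|A^{1/2}U_{m+1}\|^2-(6+t)\|U_{m+1}'\|^2+(6+t)\|V_m'(t)\|^2,
\]
obtained by testing against $(6+t)U_{m+1}'$ and applying Young's inequality on the forcing, together with the lower-order identity for $E_*(U;t)=2(U,U')+\|U\|^2$. The only new ingredient is how to handle the cross term $-2(U_{m+1},V_m'(t))$ appearing in $\frac{d}{dt}E_*$: I would invoke Lemma \ref{lem:V-derivative} to write $V_m'(t)=-AV_{m,1}(t)$ and then rewrite the cross term as $2(A^{1/2}U_{m+1},A^{1/2}V_{m,1}(t))$, whence
\[
\frac{d}{dt}E_*(U_{m+1};t)\leq 2\|U_{m+1}'\|^2-\|A^{1/2}U_{m+1}\|^2+\|A^{1/2}V_{m,1}(t)\|^2.
\]

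Summing $(6+t)E+2E_*$ and rearranging, the $\|U_{m+1}'\|^2$ coefficients combine to $-(1+t)$, producing
\[
\frac{d}{dt}\bigl[(6+t)E(U_{m+1};t)+2E_*(U_{m+1};t)\bigr]+(1+t)\|U_{m+1}'\|^2+\|A^{1/2}U_{m+1}\|^2\leq (6+t)\|V_m'\|^2+2\|A^{1/2}V_{m,1}\|^2.
\]
By Lemma \ref{lem:max-reg.V_m} with $\ell=0$, whose hypothesis $(u_0,u_1)\in D(A^{1/2})\times D(A^{1/2})$ is precisely ours, both terms on the right lie in $L^1((0,\infty))$. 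Integrating over $[0,t]$ and invoking the same algebraic identity as in Lemma \ref{lem:0th-en}, which shows that $(6+t)E+2E_*$ dominates $(1+t)E+\|U_{m+1}\|^2$ up to constants, yields both conclusions.

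The main obstacle, and essentially the only step that is not a mechanical transcription of Lemma \ref{lem:en-U_1}, is the estimate for $(U_{m+1},V_m')$ in the $E_*$ computation. A naive Cauchy--Schwarz--Young inequality would leave a bare $\|U_{m+1}\|^2$ that cannot be absorbed into the dissipation, since $0$ may belong to the spectrum of $A$ (Remark \ref{rem:0eigen}); distributing a half-derivative onto $V_{m,1}$ via $V_m'=-AV_{m,1}$ instead trades this for $\|A^{1/2}U_{m+1}\|^2$, which is absorbable, plus $\|A^{1/2}V_{m,1}\|^2$, which is integrable by the maximal-regularity bound \eqref{eq:max-reg.V_m}. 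This is precisely the point at which Lemma \ref{lem:max-reg.V_m} is used.
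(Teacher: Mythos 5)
Your proposal is correct and takes essentially the same route as the paper: the same weighted functional $(6+t)E(U_{m+1};t)+2E_*(U_{m+1};t)$, the same rewriting $V_m'=-AV_{m,1}$ to convert the cross term into $\|A^{1/2}U_{m+1}\|^2+\|A^{1/2}V_{m,1}\|^2$, and the same application of Lemma \ref{lem:max-reg.V_m} with $\ell=0$ to make the right-hand side integrable. You also correctly identify the one non-mechanical point (avoiding a bare $\|U_{m+1}\|^2$ because $0$ may lie in the spectrum of $A$), which is exactly how the paper argues.
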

\begin{proof}
The strategy is similar to Lemma \ref{lem:en-U_1}.
In view of Lemma \ref{lem:orderm}, we compute
\begin{align*}
&\frac{d}{dt}
\Big[(6+t)E(U_{m+1},t)+2E_*(U_{m+1},t)\Big]
\\
&
=\|U_{m+1}'\|^2+\|A^{1/2}U_{m+1}\|^2
+2(6+t)(U_{m+1}',-U_{m+1}'-V_{m}')
\\
&\quad +4\|U_{m+1}'\|^2+4(U_{m+1},-AU_{m+1}-V_{m}')
\\
&
=-(7+2t)\|U_{m+1}'\|^2-3\|A^{1/2}U_{m+1}\|^2
-2(6+t)(U_{m+1}',V_{m}')
+4(U_{m+1},AV_{m,1})
\\
&
=-(1+t)\|U_{m+1}'\|^2-\|A^{1/2}U_{m+1}\|^2
-(6+t)\|V_{m}'\|^2
+2\|A^{1/2}V_{m,1}\|^2.
\end{align*}
By applying Lemma \ref{lem:max-reg.V_m} \eqref{eq:max-reg.V_m} with $\ell=0$, 
the proof is complete. 
\end{proof}

To give extra decay property of $\frac{d^\ell}{dt^\ell}U_{m+1}$ via induction, 
we use the following lemma.
\begin{lemma}\label{lem:en-induction}
Assume $(\tilde{u}_0,\tilde{u}_1)\in D(A^{1/2})\times D(A^{1/2})$ and $F\in C([0,\infty);D(A))$. 
Let $w$ be the solution of 
\begin{align}\label{eq:aux-w}
\begin{cases}
w''+Aw+w'=AF, \quad t>0,
\\
(w,w')(0)=(\tilde{u}_0,\tilde{u}_1).
\end{cases}
\end{align}
If $w$ and $F$ satisfy additional condition
\[
(1+t)^{2\ell+1}\|AF(t)\|^2+(1+t)^{2\ell}\|A^{1/2}F(t)\|^2+(1+t)^{2\ell-1}
\|w(t)\|^2\in L^1((0,\infty)), 
\]
then 
\[
(1+t)^{2\ell+1}E(w,t)+(1+t)^{2\ell}\|w\|^2\in L^\infty((0,\infty)), 
\quad
(1+t)^{2\ell+1}\|w'(t)\|^2\in L^1((0,\infty)).
\]
\end{lemma}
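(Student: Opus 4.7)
The plan is a weighted energy estimate in the style of Lemma \ref{lem:en-U_1}, adapted so that the weights become $(1+t)^{2\ell+1}$ and $(1+t)^{2\ell}$. As in Lemma \ref{lem:order1}, I would first regularize $(\tilde{u}_0,\tilde{u}_1)$ and $F$ so that $w$ is smooth enough to justify the identities below classically; they follow directly from $w''+Aw=-w'+AF$:
\begin{align*}
\frac{d}{dt}E(w;t) &= -2\|w'\|^2+2(A^{1/2}w',A^{1/2}F),\\
\frac{d}{dt}E_*(w;t) &= 2\|w'\|^2-2\|A^{1/2}w\|^2+2(A^{1/2}w,A^{1/2}F),
\end{align*}
where $E_*(w;t)=\|w\|^2+2(w,w')$ is the auxiliary functional from the proof of Lemma \ref{lem:0th-en}.

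The core construction is the combination
\[
\Phi(t)=(C_\ell+t)^{2\ell+1}E(w;t)+K_\ell(C_\ell+t)^{2\ell}E_*(w;t),
\]
with $K_\ell>\ell+\tfrac12$ chosen so that the coefficient of $(C_\ell+t)^{2\ell}\|A^{1/2}w\|^2$ in $\frac{d}{dt}\Phi$ is strictly negative, and $C_\ell$ taken large enough that the $(2\ell+1)(C_\ell+t)^{2\ell}\|w'\|^2$ produced when differentiating the first weight is absorbed into the coercive $-2(C_\ell+t)^{2\ell+1}\|w'\|^2$ uniformly on $[0,\infty)$. Applying Young's inequality to the two forcing inner products (writing $(A^{1/2}w',A^{1/2}F)=(w',AF)$) and to the lower-order remainder $2\ell K_\ell(C_\ell+t)^{2\ell-1}E_*(w;t)$ via $|E_*|\leq 2\|w\|^2+\|w'\|^2$ then yields
\begin{align*}
\frac{d}{dt}\Phi(t)&+c(1+t)^{2\ell+1}\|w'\|^2+c(1+t)^{2\ell}\|A^{1/2}w\|^2\\
&\leq C\bigl[(1+t)^{2\ell+1}\|AF\|^2+(1+t)^{2\ell}\|A^{1/2}F\|^2+(1+t)^{2\ell-1}\|w\|^2\bigr],
\end{align*}
whose right-hand side lies in $L^1((0,\infty))$ precisely by the three hypotheses of the lemma.

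Integrating over $[0,t]$ gives $\Phi\in L^\infty$ and $(1+t)^{2\ell+1}\|w'\|^2\in L^1$. Controlling the cross piece $2(w,w')$ inside $E_*$ by one more Young step transfers the $L^\infty$ bound on $\Phi$ to $(1+t)^{2\ell+1}E(w;t)\in L^\infty$. For the remaining assertion $(1+t)^{2\ell}\|w\|^2\in L^\infty$ I would integrate
\[
(1+t)^{2\ell}\|w(t)\|^2=\|w(0)\|^2+\int_0^t\bigl[2\ell(1+s)^{2\ell-1}\|w\|^2+2(1+s)^{2\ell}(w,w')\bigr]ds
\]
and bound $2(w,w')\leq(1+s)^{-1}\|w\|^2+(1+s)\|w'\|^2$, which reduces the right-hand side to constant multiples of $\int(1+s)^{2\ell-1}\|w\|^2\,ds$ (finite by hypothesis) and $\int(1+s)^{2\ell+1}\|w'\|^2\,ds$ (finite by what was just proved).

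The main obstacle is the simultaneous choice of $K_\ell$ and $C_\ell$: $K_\ell$ must beat the $+(2\ell+1)$ production from the weight derivative on $E(w;t)$, while $C_\ell$ must swallow the corresponding $\|w'\|^2$ remainder, and each lower-order correction in the differential inequality has to be matched against one of the three $L^1$ data assumptions. Since the bad coefficient depends linearly on $K_\ell$ and the weight-derivative corrections fall exactly into the three exponents $2\ell-1$, $2\ell$, $2\ell+1$ covered by the hypotheses, the tradeoff is always available and the computation is then routine.
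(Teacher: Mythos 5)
Your proposal is correct and follows essentially the same route as the paper: the same weighted combination of $E(w;t)$ and $E_*(w;t)=\|w\|^2+2(w,w')$ with weights $(t_0+t)^{2\ell+1}$ and $(t_0+t)^{2\ell}$, the same Young absorptions matching the three $L^1$ hypotheses, and the same integration step (the paper simply fixes $K_\ell=\ell+2$ and $t_0=6+4\ell+2\ell(\ell+1)$, and regularizes with $J_\ep=(I+\ep A)^{-1}$ instead of approximating the data). No substantive difference.
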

\begin{proof}
Let $t_0\geq 1$ be a constant determined later. 
Since we can use an approximation
$w_{\ep}(t)=J_\ep w(t)$ with $J_\ep=(I+\ep A)^{-1}$, 
which satisfies
\begin{align}\label{eq:aux-w-ep}
\begin{cases}
w_\ep''+Aw_\ep+w_\ep'=J_\ep AF, \quad t>0,
\\
(w,w')(0)=(J_\ep \tilde{u}_0,J_\ep \tilde{u}_1)\in D(A^{3/2})\times D(A^{3/2}),
\end{cases}
\end{align}
we may assume $w\in C^2([0,\infty);D(A^{1/2}))\cap C^1([0,\infty);D(A))\cap C([0,\infty);D(A^{3/2}))$. 
By direct computation with the use of \eqref{eq:aux-w}, we have
\begin{align*}
&\frac{d}{dt}
\Big[(t_0+t)^{2\ell+1}E(w,t)+(\ell+2)(t_0+t)^{2\ell}E_*(w,t)\Big]
\\
&=
(2\ell+1)(t_0+t)^{2\ell}E(w,t)
+2(t_0+t)^{2\ell+1}(w',-w'+AF)
\\
&\quad
+
2(\ell+2)
\Big[
\ell(t_0+t)^{2\ell-1}E_*(w,t)+(t_0+t)^{2\ell}\|w'\|^2
+(t_0+t)^{2\ell}(w,-Aw-AF)
\Big]
\\
&\leq 
(t_0+t)^{2\ell}
\Big[
(4\ell+5+\tfrac{2\ell(\ell+1)}{t_0+t})\|w'\|^2
-2\|A^{1/2}w\|^2
-
(t_0+t)\|w'\|^2
\Big]
\\
&\quad
+(t_0+t)^{2\ell+1}\|AF\|^2
+(\ell+2)^2(t_0+t)^{2\ell}\|A^{1/2}F\|^2
+4\ell(\ell+1)(t_0+t)^{2\ell-1}\|w\|^2.
\end{align*}
Taking $t_0=6+4\ell+2\ell(\ell+1)$, we obtain the desired estimate.
\end{proof}

Here we prove Proposition \ref{prop:U_mm}. 
\begin{proof}[Proof of Proposition \ref{prop:U_mm}]
By Lemma \ref{lem:en-U_ell}, we have
\[
(1+t)\|U_{m+1}'(t)\|^2\in L^1((0,\infty)).
\]
Here $w=U_{m+1}'$ satisfies
\begin{align}\label{eq:aux-U_m'}
\begin{cases}
w''+Aw+w'=-V_{m}''=-A^2V_{m,2}, \quad t>0,
\\
(w,w')(0)=((-1)^{m+1}u_1,-AU_{m+1}(0)-V_{m}'(0))=((-1)^{m+1}u_1,AV_{m,1}(0))
\end{cases}
\end{align}
in view of the compatibility condition. 
By virtue of Lemma \ref{lem:max-reg.V_m} with $\ell=1$, 
applying Lemma \ref{lem:en-induction} with $\ell=1$, we have
\[
(1+t)^3\|U_{m+1}''(t)\|^2=(1+t)^3\|w'(t)\|^2\in L^1((0,\infty)).
\]
Successively, we apply Lemma \ref{lem:en-induction} 
with $w=\frac{d^k}{dt^k}U_{m+1}$ and $\ell=k+1$ ($k=1,\ldots,m$). 
Then finally, from Lemma \ref{lem:max-reg.V_m} we deduce
\[
(1+t)^{2m+1}
\left\|\frac{d^{m+1}}{dt^{m+1}}U_{m+1}(t)\right\|^2
\leq 
C(1+t)^{2m+1}E\Big(\frac{d^m}{dt^m}U_{m+1},t\Big)\in L^\infty((0,\infty));
\]
note that we need the regularity $(u_0,u_1)\in D(A^{m+1/2})\times D(A^{m+1/2})$
to verify all deductions. 
The proof is complete.
\end{proof}

To close the paper, we give a proof of Theorem \ref{thm:main}. 
\begin{proof}[Proof of Theorem \ref{thm:main}]
Observing the decomposition (at the beginning of this section)
and Lemma \ref{lem:V-derivative}, we have
\[
u(t)=\sum_{\ell=0}^m (-A)^\ell V_{\ell,\ell}(t)+\frac{d^{m+1}}{dt^{m+1}}U_{m+1}(t).
\]
Combining the definition of $V_{\ell,\ell}$ and Proposition \ref{prop:U_mm}, 
we obtain 
the desired assertion.
\end{proof}

\subsection*{Acknowedgements}
This work is partially supported 
by JSPS KAKENHI Grant Number JP18K134450.


\end{document}